\let\mathcal\mathscr
\numberwithin{equation}{section}
\newtheorem{theorem}{Theorem}[section] 
\newtheorem{lemma}[theorem]{Lemma}
\newtheorem{proposition}[theorem]{Proposition}
\newtheorem{corollary}[theorem]{Corollary}
\theoremstyle{definition}
 \newtheorem*{acknowledgements}{Acknowledgements}
 \newtheorem*{notation}{Notation}
\renewcommand{\phi}{\varphi}
\renewcommand{\leq}{\leqslant}
\renewcommand{\geq}{\geqslant}
\renewcommand{\b}{\mathbf{b}}
\renewcommand{\r}{\mathbf{r}}
\newcommand{\md}[1]{  \left(\textnormal{mod}\ #1\right)}
\newcommand{\N}{\mathbb{N}}
\newcommand{\R}{\mathbb{R}}
\renewcommand{\l}{\left}
\renewcommand{\r}{\right}
\renewcommand{\b}{\mathbf}
\renewcommand{\epsilon}{\varepsilon}
\renewcommand{\leq}{\leqslant}
\renewcommand{\geq}{\geqslant}
\renewcommand{\#}{\sharp}
\newcommand{\beq}[2]
{
\begin{equation}
\label{#1}
{#2}
\end{equation}
}
\title 
[Asymptotics of the $k$-free diffraction measure via discretisation]
{Asymptotics of the $k$-free diffraction measure via discretisation} 
\author{Nick Rome} 
\address{School of Mathematics
\\ University of Bristol
 \\ Bristol \\  BS8 1TW \\  UK}  
\address{
 IST Austria
\\ 
Am Campus 1
 \\ 3400 Klosterneuburg
 \\  
Austria}  
\email{nick.rome@bristol.ac.uk}
\author{Efthymios Sofos}  
\address{Mathematics Department    \\ Glasgow University
\\   Glasgow   \\ G12 8QQ  \\  UK}  
\email{efthymios.sofos@glasgow.ac.uk}
\subjclass[2010] {
52C23, %  Quasicrystals, aperiodic tilings 
78A45. %Diffraction, scattering 
 }
\date{\today}
\begin{document}

\vspace{-2.5cm}

\begin{abstract}    
We determine the  diffraction intensity 
   of the $k$-free 
integers near the origin. 
\end{abstract}  

\maketitle
 
\vspace{-0,4cm}

\setcounter{tocdepth}{1} 

\section{Introduction}   \label{s:intro}  
Point sets in Euclidean space exhibiting pure point diffraction play an important r\^ole in the theory of aperiodic order as mathematical models of quasicrystals.  The growth of the diffraction intensity 
  $Z(\epsilon)$  as $\epsilon\to 0^+$
demonstrates how stable the structure of the point set is. For instance, 
a homogeneous Poisson process displays growth $Z(\epsilon) = \epsilon$, whereas for a lattice one has $Z(\epsilon) = 0$.
Power laws are typical of aperiodically ordered sets (c.f.~\cite{statmech}), however, the only previously known example of a non-integer exponent is given by the   Thue--Morse sequence.

Recently, sets of number theoretic origin, such as the $k$-free integers, have gained attention  
as they are conjectured to be   weak model sets with extremal density.  
Baake and Coons \cite{arXiv:1804.05768}
studied the fluctuation of the density of this set by considering the scaling behaviour of the 
diffraction measure $\nu_k$, given 
by $Z_k(\epsilon) = \nu_k((0,\epsilon])/\nu_k(\{0\})$, as $\epsilon \rightarrow 0^+$.
 They used a  sieving argument to show  
\[ \lim_{\epsilon \to 0 } \frac
{\log  Z_{k} (\epsilon)}
{\log \epsilon}
=2-1/k
 .\]     
We prove that  a power law holds for $k$-free integers, thus confirming the conjectured behavior: 
 \begin{theorem}
\label
{thm:main} 
For all 
 $k>1 $, as $\varepsilon \rightarrow 0^+$ we have 
 \[
Z_k(\epsilon)=
\frac{c_k }{2 k}  
  \epsilon^{2- 1/k   }   
\l(1 +o( \epsilon^{  1/ k  }   )   \r) 
,\] where  $c_k$ is an explicit positive constant.\end{theorem}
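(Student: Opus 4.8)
\emph{The plan.} I would translate the growth of $\nu_{k}$ near $0$ into an arithmetic sum over the denominators of its point masses, evaluate that sum by locating the rightmost pole of an associated Dirichlet series, and add up the contributions of the numerators.
\emph{Step 1: the shape of $\nu_{k}$.} The autocorrelation of the $k$-free integers has coefficients $\gamma(\{m\})=\prod_{p^{k}\mid m}(1-p^{-k})\prod_{p^{k}\nmid m}(1-2p^{-k})$, a short sieve computation already contained in \cite{arXiv:1804.05768}. Using the identity
\[
\prod_{p^{k}\mid m}\Bigl(1-\frac1{p^{k}}\Bigr)\prod_{p^{k}\nmid m}\Bigl(1-\frac2{p^{k}}\Bigr)=\prod_{p}\Bigl(\Bigl(1-\frac1{p^{k}}\Bigr)^{2}+\frac1{p^{2k}}\sum_{j=1}^{k}c_{p^{j}}(m)\Bigr)
\]
and expanding the Euler product into Ramanujan sums, $\gamma(\{m\})=\zeta(k)^{-2}\sum_{q\in\cQ_{k}}w(q)\,c_{q}(m)$, where $\cQ_{k}=\{q\ge1:p^{k+1}\nmid q\text{ for all primes }p\}$ and $w(q)=\prod_{p\mid q}(p^{k}-1)^{-2}$. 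Taking Fourier transforms, near the origin $\nu_{k}$ places mass $\zeta(k)^{-2}w(q)$ at each reduced fraction $a/q$ with $q\in\cQ_{k}$ and mass $\zeta(k)^{-2}$ at $0$; hence for $0<\epsilon<1$,
\[
Z_{k}(\epsilon)=\zeta(k)^{2}\,\nu_{k}\bigl((0,\epsilon]\bigr)=\sum_{q\in\cQ_{k}}w(q)\,\#\{1\le a\le\epsilon q:(a,q)=1\}.
\]
The reason ``discretisation'' is essential: only $q\gtrsim1/\epsilon$ contribute, and there the inner count is a genuine integer count; replacing it by the naive $\epsilon\,\varphi(q)$ would create a false linear term, since $\sum_{q}w(q)\varphi(q)<\infty$.

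\emph{Steps 2--3: reduction to a tail sum and its evaluation.} Interchanging the two summations (everything is nonnegative), $Z_{k}(\epsilon)=\sum_{a\ge1}S_{a}(a/\epsilon)$ with $S_{a}(X)=\sum_{q\in\cQ_{k},\,q\ge X,\,(a,q)=1}w(q)$; write $\beta_{0}=2-\tfrac1k>1$. The asymptotics of $S_{a}(X)$ are governed by the Dirichlet series
\[
D_{a}(s)=\sum_{\substack{q\in\cQ_{k}\\(a,q)=1}}\frac{w(q)}{q^{s}}=\prod_{p\nmid a}\Bigl(1+\frac1{(p^{k}-1)^{2}}\sum_{j=1}^{k}p^{-js}\Bigr),
\]
absolutely convergent for $\Re s>-\beta_{0}$. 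The $j=k$ term carries the singularity at the boundary: writing $D_{a}(s)=\zeta(ks+2k)G_{a}(s)$, with $G_{a}(s)=\prod_{p\mid a}(1-p^{-ks-2k})\prod_{p\nmid a}(1-p^{-ks-2k})\bigl(1+(p^{k}-1)^{-2}\sum_{j}p^{-js}\bigr)$, a direct estimate of the Euler factors shows $G_{a}$ is a convergent product on a half-plane $\Re s>\sigma_{1}$ for some fixed $\sigma_{1}<-2$, and every remaining singularity of $D_{a}$ lies strictly to the left of $\Re s=-2$. Hence $D_{a}$ continues past $-\beta_{0}$ with only a simple pole there, of residue $\tfrac1kG_{a}(-\beta_{0})$ (note $k\beta_{0}=2k-1$, so $p^{-ks-2k}|_{s=-\beta_{0}}=p^{-1}$ and $G_{a}(-\beta_{0})>0$). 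A Perron contour shift to a line $\Re s<-2$ — legitimate since $\sigma_{1}<-2$, the polynomial growth of $\zeta$ on vertical lines being absorbed by the usual truncation — gives
\[
S_{a}(X)=\frac{G_{a}(-\beta_{0})}{k\beta_{0}}\,X^{-\beta_{0}}+O\bigl(X^{-2-\eta}\bigr)
\]
for some fixed $\eta>0$, the $a$-dependence of the error entering only through the finitely many removed Euler factors and therefore mild enough to survive summation.

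\emph{Step 4: summation over the numerators and the constant.} With $X=a/\epsilon$,
\[
Z_{k}(\epsilon)=\frac{\epsilon^{\beta_{0}}}{k\beta_{0}}\sum_{a\ge1}\frac{G_{a}(-\beta_{0})}{a^{\beta_{0}}}+O\bigl(\epsilon^{2+\eta}\bigr),
\]
both series converging as $\beta_{0}>1$. Factoring $G_{a}(-\beta_{0})=G_{1}(-\beta_{0})\prod_{p\mid a}(\cdots)$ turns the main series into a convergent Euler product, and since $\tfrac1{k\beta_{0}}=\tfrac1{2k-1}$ one obtains
\[
Z_{k}(\epsilon)=\frac{c_{k}}{2k}\,\epsilon^{2-1/k}\bigl(1+O(\epsilon^{1/k+\eta})\bigr),\qquad c_{k}=\frac{2k}{2k-1}\sum_{a\ge1}\frac{G_{a}(-\beta_{0})}{a^{\beta_{0}}}>0,
\]
and $O(\epsilon^{1/k+\eta})=o(\epsilon^{1/k})$ gives the theorem.

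\emph{Main obstacle.} The crux is Step 3: continuing $D_{a}$ past its abscissa by correctly splitting off the $\zeta(ks+2k)$ factor, and then pushing the residual contour \emph{strictly} past $\Re s=-2$ — which is exactly what is needed to beat $\epsilon^{2}$ and reach the claimed $o(\epsilon^{1/k})$ — all the while keeping the error \emph{uniform} in the coprimality modulus $a$: even a clean power-of-$a$ loss there would destroy the $a$-summation in Step 4. An elementary avatar of the same mechanism — presumably the ``discretisation'' of the title — is to estimate $S_{a}(X)$ by hand: write $q=\prod p^{e_{p}}$ with $0\le e_{p}\le k$, note that $q\ge X$ forces $\rad(q)\gg X^{1/k}$ with the extremal regime $e_{p}\equiv k$, and sum $w(q)$ over these by counting squarefree radicals in dyadic ranges with a power-saving remainder, tracking coprimality to $a$ throughout; the uniformity in $a$ is once more the point to watch.
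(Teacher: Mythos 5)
Your Steps 1 and 2 are sound and, once the paper's discretisation is unwound, coincide with its Lemma~\ref{lem:first approxim} and Lemma~\ref{lem:the new variable}: interchanging the two summations to reduce $Z_k(\epsilon)$ to the tail sums $S_a(a/\epsilon)$ is exactly the paper's passage from $\widetilde{Z}_k(N)$ to the quantities $z_k(c)$, and your identification of the boundary singularity at $s=-(2-\frac1k)$ as coming from the $j=k$ term (equivalently, from $q=t^k$ with $t$ squarefree) is the content of Lemma~\ref{lem:factorising}. The gap is in Step 3, and it is fatal as stated.

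The factorisation $D_a(s)=\zeta(ks+2k)G_a(s)$ does not continue $D_a$ past $\Re s=-2$. Expanding a generic Euler factor of $G_a$,
\[
\left(1-p^{-ks-2k}\right)\left(1+\frac{p^{-s}+\cdots+p^{-ks}}{(p^k-1)^2}\right)
=1+\sum_{j=1}^{k-1}\frac{p^{-js}}{(p^k-1)^2}+O\left(p^{-k\sigma-3k}\right)-\frac{p^{-2ks-2k}}{(p^k-1)^2}+\cdots,
\]
the last displayed cross term has modulus $\asymp p^{-2k\sigma-4k}$, which is summable over $p$ only for $\sigma>-2+\frac{1}{2k}$. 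So $G_a$ converges only in the half-plane $\Re s>-2+\frac{1}{2k}$, strictly to the right of $-2$; the $\sigma_1<-2$ you need does not exist. To continue further one must also peel off a factor $\zeta(2ks+4k)^{-1}$ (the analogue of $\sum_n\mu(n)^2n^{-s}=\zeta(s)/\zeta(2s)$), and that factor has poles at $s=-2+\rho/(2k)$ for every nontrivial zero $\rho$ of $\zeta$, i.e.\ scattered through the strip $-2<\Re s<-2+\frac{1}{2k}$. These poles are precisely what blocks the contour shift: unconditionally one can pass the line $\Re s=-2+\frac{1}{2k}$ only by the width of the Korobov--Vinogradov zero-free region, which is why the paper must invoke Walfisz's theorem (Lemma~\ref{lem:walf} and Corollary~\ref{cor:walf}) and obtains for the analogue of $S_a(X)$ only the error $X^{-2+1/(2k)}\exp\left(-\gamma(\log X)^{3/5}(\log\log X)^{-1/5}\right)$, and only $X^{-2+11/(35k)+\delta}$ even assuming RH. Your claimed $O(X^{-2-\eta})$ is not merely out of reach but false: via $q=t^k$ it would force $\sum_{t\le Y}\mu(t)^2=Y/\zeta(2)+O(Y^{-k\eta})$, contradicting the known $\Omega(Y^{1/4})$ oscillation of that error term. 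The same objection applies to your ``elementary avatar'': the power saving available when counting squarefree radicals is $Y^{1/2}$ at best, which translates into a secondary term of size $X^{-2+1/(2k)}$, not $X^{-2-\eta}$. (Your worry about uniformity in $a$, by contrast, is the harmless part: the paper's Corollary~\ref{cor:walf} handles it with a $\tau(a)^3$ loss.)
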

The constant $c_k $ is given
 in \eqref{eq:foryoursafety}. It stabilises quite rapidly, specifically, 
\[ c_k
=1+ O(  1/k )
.\]  
Our proof gives an explicit error term, namely, 
$o(\epsilon^{1/k } ) $ can be replaced by 
\[\epsilon^{ 1/k  } 
\exp\{ -\gamma 
k^{-1} 
\l(\log 1/ \epsilon   \r)^{3/5}
\l(\log \log  1/ \epsilon   \r)^{-1/5}
\}  
 \]  
for some  positive  absolute     constant 
$\gamma  $.
The improvements over previous works
stem from using a  
discretisation approach, which is new in this problem
and allows the use  of  number theory estimates.

We shall see that
the Riemann hypothesis
implies 
a much stronger approximation  
of 
the diffraction intensity 
by a power law;
we are not aware
of a previous connection 
between  the Riemann hypothesis 
and aperiodic structures. 
 \begin{theorem}
\label
{thm:main rieman} 
Assume the Riemann Hypothesis.
For every $k>1 $ and 
 $\delta>0$, as $\varepsilon \rightarrow 0^+$
we have 
\[
Z_k(\epsilon)=
\frac{c_k }{2k}
  \epsilon^{2-1/k  }   
+O 
(
  \epsilon^{2- 11/35 k -\delta}  
)
.\] 
\end{theorem}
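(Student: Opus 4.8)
The plan is to re-run the proof of Theorem~\ref{thm:main} with a single change: where that argument invokes an unconditional analytic estimate, I would substitute its strengthening under the Riemann Hypothesis. First, recall the discretisation. It expresses $Z_k(\varepsilon)$ as a weighted sum over squarefree integers; I expect this to take the form $Z_k(\varepsilon)=C_k\sum_{s}a_s\lfloor\varepsilon s^k\rfloor$, where $C_k=\zeta(k)^2\prod_p(1-2p^{-k})$ and $a_s=\mu^2(s)\prod_{p\mid s}\frac1{p^k(p^k-2)}$ is multiplicative, supported on squarefrees, with $a_p\asymp p^{-2k}$. Writing $\lfloor\varepsilon s^k\rfloor=\sum_{1\le n\le\varepsilon s^k}1$ and interchanging summations turns this into $Z_k(\varepsilon)=C_k\sum_{n\ge1}V\big((n/\varepsilon)^{1/k}\big)$, with $V(y):=\sum_{s\ge y}a_s$ the tail of a convergent series.

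The next step is an asymptotic for $V(y)$ via its Dirichlet series, which factors as $\sum_s a_s s^{-z}=\dfrac{\zeta(z+2k)}{\zeta(2z+4k)}\,H(z+2k)$ with $H$ holomorphic and bounded on vertical lines for $\Re(z+2k)>\tfrac12$. By Perron's formula, $V(y)$ has main term $\lambda y^{1-2k}$ from the simple pole of $\zeta(z+2k)$ at $z=1-2k$, while the remainder is governed by the zeros of $\zeta(2z+4k)$, i.e.\ by the zeros of $\zeta$ on the critical line; this is exactly where RH enters. After the substitution $w=z+2k$ the relevant contour integral is the classical one attached to $\zeta(w)/\zeta(2w)$, so the remainder in $V(y)$ is the error term of the squarefree-counting problem, and I would quote the RH-conditional bound $\sum_{n\le x}\mu^2(n)=x/\zeta(2)+O(x^{11/35+\delta})$ (Jia), transported to the present shifted, $H$-twisted sum. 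This yields $V(y)=\lambda y^{1-2k}+O\big(y^{11/35-2k+\delta}\big)$.

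Finally one feeds this into $Z_k(\varepsilon)=C_k\sum_{n\ge1}V((n/\varepsilon)^{1/k})$. The main terms assemble to $\lambda C_k\,\varepsilon^{2-1/k}\sum_{n\ge1}n^{1/k-2}=\lambda C_k\,\zeta(2-1/k)\,\varepsilon^{2-1/k}$; since only this pole term contributes to the constant, it must equal the $\dfrac{c_k}{2k}$ of Theorem~\ref{thm:main} (and one reads off $c_k=1+O(1/k)$, because $\zeta(k)$, $\prod_p(1-2p^{-k})$, $H(1)$ and $\zeta(2-1/k)$ all tend to limits as $k\to\infty$). The remainder contributes $\ll\sum_{n\ge1}\big((n/\varepsilon)^{1/k}\big)^{11/35-2k+\delta}\ll\varepsilon^{2-11/(35k)-\delta}$, the $n$-series converging since $(11/35-2k)/k<-1$ for all $k>1$, and the surviving $\varepsilon$- and logarithmic losses being absorbed into $\delta$. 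This is the asserted estimate.

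The step I expect to be the real obstacle is transporting the RH-conditional squarefree-counting bound to the weighted sum $V(y)$: one must check that the exponential-sum inputs (van der Corput / Vaughan) and the bounds for $\zeta^{-1}$ on vertical lines under RH which drive Jia's argument are not spoiled by the auxiliary factor $H(w)$ (harmless, since $H$ is holomorphic and bounded on the lines in play), and, more delicately, that the resulting bound holds uniformly enough in $y$ to be summed over $n$. Everything else is a verbatim copy of the proof of Theorem~\ref{thm:main}.
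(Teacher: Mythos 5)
Your overall strategy is exactly the paper's: re-run the proof of Theorem~\ref{thm:main} verbatim, replacing the unconditional Walfisz input by the RH-conditional squarefree-count error $O(x^{11/35+\delta})$ (this is Liu's theorem, quoted as Lemma~\ref{lem:liu}; the paper does not use Jia's bound), and your reconstruction of the discretised identity, of the constant $C_k=\xi_k$, and of the final exponent bookkeeping $\epsilon^{2-11/35k-\delta}$ are all correct. The one place you diverge is the step you yourself flag as the obstacle: transporting the RH-conditional bound to the weighted tail $V(y)$. You propose to do this by Perron's formula applied to $\zeta(z+2k)\zeta(2z+4k)^{-1}H(z+2k)$, which would indeed force you to check that the van der Corput/exponential-sum machinery behind Liu's theorem survives the twist by $H$ --- a genuinely delicate reworking of a long proof. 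The paper avoids this entirely by an elementary Dirichlet-convolution device: first the coprimality-restricted count $\sum_{n\le x,\ \gcd(n,a)=1}\mu(n)^2$ is reduced to Liu's theorem as a black box by writing the summand as $(\mu^2\ast g_a)(n)$ with $g_a$ supported on integers all of whose prime factors divide $a$ (Corollary~\ref{cor:liu}), and then the multiplicative weight $\prod_{p\mid m}(1+\delta(p))$ with $|\delta(p)|\le 4p^{-2}$ is peeled off by a second convolution over small divisors $d\le u$ (Lemma~\ref{lem:the icecream lemma}). Since the convolving functions decay fast enough that the error terms sum trivially, no uniformity issue in $y$ arises and nothing in Liu's argument needs to be reopened. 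So your plan is sound and reaches the right answer, but the obstacle you identify dissolves if you handle the weight arithmetically rather than analytically.
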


 \subsection{The discretisation approach}  
Our
method is  entirely  
different from the one used by 
Baake and Coons.
Before explaining its steps, we must note that the crucial
reason behind our
 improvements 
over the work of Baake and Coons is our  discretisation trick
and not the use of  
analytic number theory estimates. 
Indeed,   our discretisation trick followed by 
a  
sieving argument that is similar to the one of Baake and Coons, 
  would
 produce an error term 
$O(\epsilon^{1/k}) $. This 
  is plainly    weaker than our Theorem \ref{thm:main} 
but still an ample
improvement over what was known before.  

Our proof 
  has three steps:
\begin{enumerate}
\item (\textit{Discretisation})
We  
 approximate
$Z_k(\epsilon)$
by  
 $Z_k(1/N)$ for   a
certain  
 integer  $N$ in Lemma~\ref{lem:first approxim}. 
We make a slightly unusual use 
of the 
auxiliary variable   in
Lemma~\ref{lem:the new variable}:
noting  that it 
divides certain integers
allows   expressing
 $Z_k(1/N)$
as   a
sum of certain   
quantities $z_k(c)$.  
These objects 
are closer to number theory than the diffraction measure.
\item  (\textit{Analysing $z_k(c)$}) 
In  Lemma~\ref{lem:factorising}
and
Proposition~\ref{prop:final step} we study  $z_k(c)$.   
 By taking the validity of Proposition~\ref{prop:final step}
for granted we 
prove
     Theorem~\ref{thm:main}
at the end of \S \ref {s:zkc2}.  
\item (\textit{Zero-free region information})
The proof of 
Proposition~\ref{prop:final step}
is given in  \S\ref{s:summoning the demons}. It
 uses 
a result of Walfisz 
on the distribution of square-free numbers, 
whose proof hinges upon  
the zero-free region of the Riemann zeta function. 
\end{enumerate}
The 
leading constant of 
Theorem \ref{thm:main}
is analysed in 
  Section
\ref{s:lead const}.
Section
\ref {s:under rieman} gives 
the implications of 
the  Riemann hypothesis
about the diffraction measure, namely
Theorem \ref {thm:main rieman}. 
 \begin{notation}
All implied constants 
in 
the Landau/Vinogradov 
$O$-big
notation
$O(), \ll  $
are absolute.
Any further dependence on
a further  quantity $h$
 will be recorded by the use of a subscript 
$O_h ( ), \ll_h$.
The number of positive integer divisors of an
integer $n$
is denoted by
$\tau(n)$,
the M\"{o}bius function by $\mu(n)$
and the indicator function of the $k$-free integers $n$ 
by $\mu_k(n)$.
\end{notation}

\begin{acknowledgements} We thank 
Michael Baake and Michael Coons
for helpful comments that helped improve the exposition
of this work.
\end{acknowledgements}

\section{The proof of Theorem~\ref{thm:main}}
\subsection{Discretisation}
\label
{s:the auxiliary variable} 
For any $k, N\in\N$ we let 
\beq
{def:ckN}
{
\widetilde{Z}_k(N)
:=
\sum_{\substack { q\in \N   }} \mu_{k+1}(q)
\bigg
(
\prod_{\substack{ p \text{ prime}  ,  p\mid q } }
\frac{1}{(p^k -1 )^2}
\bigg
)
\#\l\{ m \in \N \cap \l [1,\frac{q}{N} \r ] :  \gcd(m,q)=1 \r\}
.} 
The function 
$\widetilde{Z}_k(N)$ 
is well-defined because 
its modulus 
is at most 
\beq
{eq:absolute convergence}
{
\sum_{\substack { q\in \N   }} \mu_{k+1}(q)
\bigg
(
\prod_{\substack{ p \text{ prime}  ,  p\mid q } }
\frac{1}{(p^k -1 )^2}
\bigg
)
q
\leq 
\prod_{p  }
\l(1+ 
\sum_{n=1}^k \frac{p^n }{(p^k -1)^2 }
\r )
\leq \prod_{p   }
\l(1+ 
 \frac{k   }{p^k -2 }
\r )
<\infty
.}
\begin{lemma}
\label
{lem:first approxim}
For any $\epsilon \in (0,1)$ let $N$ be the integer part of $1/\epsilon$.
Then 
$\widetilde{Z}_k(N+1)
\leq 
Z_k(\epsilon)
\leq 
\widetilde{Z}_k(N)
$.
\end{lemma}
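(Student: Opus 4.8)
The plan is to make the diffraction intensity completely explicit and then close the argument by an elementary monotonicity estimate. I would start from the known structure of the diffraction measure of the $k$-free integers (Baake--Coons \cite{arXiv:1804.05768}): $\nu_k$ is pure point and $1$-periodic, its central mass is $\nu_k(\{0\})=\zeta(k)^{-2}$, and for a rational $m/q$ in $(0,1)$ written in lowest terms
\[
\nu_k(\{m/q\})=
\begin{cases}
\zeta(k)^{-2}\prod_{p\mid q}(p^k-1)^{-2}, & q \text{ is }(k+1)\text{-free},\\[2pt]
0,& \text{otherwise}.
\end{cases}
\]
For a self-contained treatment I would instead compute the autocorrelation $\gamma=\sum_{z\in\Z}\eta(z)\delta_z$ of the $k$-free integers, where $\eta(z)=\prod_{p^k\mid z}(1-p^{-k})\prod_{p^k\nmid z}(1-2p^{-k})$, expand each local factor as $(1-2p^{-k})+p^{-k}\mathbf{1}[p^k\mid z]$, apply Poisson summation term by term to pass to $\nu_k=\widehat\gamma$, and read off the contribution to each rational; the products over primes telescope to the displayed shape, the normalisation $\zeta(k)^{-2}$ emerging from the value at $z=0$.

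Granting this, I would fix $\epsilon\in(0,1)$. Since $\nu_k$ is supported on $\Q$ and $\epsilon<1$, the atoms of $\nu_k$ in $(0,\epsilon]$ are precisely the fractions $m/q$ with $q\in\N$ being $(k+1)$-free, $\gcd(m,q)=1$ and $1\le m\le\epsilon q$; here $m<q$ is automatic because $\epsilon q<q$. Summing their masses and dividing by $\nu_k(\{0\})$ gives
\[
Z_k(\epsilon)=\sum_{q\in\N}\mu_{k+1}(q)\bigg(\prod_{\substack{p\text{ prime},\,p\mid q}}\frac{1}{(p^k-1)^2}\bigg)\#\l\{m\in\N\cap\l[1,\epsilon q\r]:\gcd(m,q)=1\r\},
\]
which is exactly the series \eqref{def:ckN} defining $\widetilde Z_k$ with the quantity $q/N$ replaced by $\epsilon q$; absolute convergence is again covered by \eqref{eq:absolute convergence}.

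To finish, I would sandwich $\epsilon q$ between $q/(N+1)$ and $q/N$. Since $N$ is the integer part of $1/\epsilon$ we have $N\le 1/\epsilon<N+1$, hence $\tfrac1{N+1}<\epsilon\le\tfrac1N$ and therefore $\tfrac{q}{N+1}<\epsilon q\le\tfrac qN$ for every $q\in\N$. The integer intervals thus nest, $[1,\tfrac q{N+1}]\cap\N\subseteq[1,\epsilon q]\cap\N\subseteq[1,\tfrac qN]\cap\N$, and restricting to the residues coprime to $q$ preserves the inclusions, so
\[
\#\l\{m\le\tfrac q{N+1}:\gcd(m,q)=1\r\}\le\#\l\{m\le\epsilon q:\gcd(m,q)=1\r\}\le\#\l\{m\le\tfrac qN:\gcd(m,q)=1\r\}.
\]
All the coefficients $\mu_{k+1}(q)\prod_{p\mid q}(p^k-1)^{-2}$ are nonnegative, so summing these inequalities over $q$ term by term (legitimate by the absolute convergence just recalled) yields $\widetilde Z_k(N+1)\le Z_k(\epsilon)\le\widetilde Z_k(N)$, as claimed.

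The only substantive ingredient is the closed form for $\nu_k$ in the first step; once it is in hand, everything else is bookkeeping together with the obvious monotonicity of $x\mapsto\#([1,x]\cap\N)$. I expect the sole delicate point, in the self-contained route, to be justifying the termwise Poisson summation for $\gamma$ and checking that the resulting Euler products collapse to the stated expression with the correct mass at the origin.
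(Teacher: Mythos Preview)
Your argument is correct and matches the paper's proof: both write $Z_k(\epsilon)$ as the sum $\sum_q \mu_{k+1}(q)\prod_{p\mid q}(p^k-1)^{-2}\#\{m\le \epsilon q:\gcd(m,q)=1\}$ (the paper citing Baake--Moody--Pleasants rather than Baake--Coons) and then sandwich termwise using $\tfrac{1}{N+1}<\epsilon\le\tfrac{1}{N}$ together with the nonnegativity of each summand. Your additional sketch of a self-contained Poisson-summation derivation of the atomic masses is extra detail not present in the paper, but the core monotonicity step is identical.
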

\begin{proof}
The 
 work of
Baake, Moody and Pleasants~\cite{MR1778906}
gives
\[
Z_k(\epsilon)
=
\sum_{q \geq 1/\epsilon } \mu_{k+1}(q)
\sum_{\substack{ 1\leq m \leq q\epsilon \\  \gcd(m,q)=1 } } 
\prod_{p\mid q }\frac{1}{(p^k -1 )^2}
.\] 
The condition $q \geq 1/\epsilon$ is implied by the presence of the sum over
$m$ and it can therefore be omitted. The inequality $N\leq \frac{1}{\epsilon} <N+1$
shows that 
$\widetilde{Z}_k (N+1)$ equals 
\[
\sum_{q =1 }^\infty 
\mu_{k+1}(q)
\sum_{\substack{ 1\leq m \leq q/(N+1)  \\  \gcd(m,q)=1 } } 
\prod_{p\mid q }\frac{1}{(p^k -1 )^2}
\leq 
Z_k(\epsilon) 
\leq 
\sum_{q =1 }^\infty \mu_{k+1}(q)
\sum_{\substack{ 1\leq m \leq q/N  \\  \gcd(m,q)=1 } } 
\prod_{p\mid q }\frac{1}{(p^k -1 )^2}
=\widetilde{Z}_k(N)
.
\qedhere
\]
 \end{proof}

\begin{lemma}
\label
{lem:the new variable}
For any 
positive integer $N$ 
we have 
\beq
{eq:morbangel}
{
\widetilde{Z}_k(N)
=
\sum_{\substack{ c\in \N  \\ N \text{ divides } c }}
z_k(c)
,}
 where 
\[
z_k(c)
:=
\sum_{\substack{ r\in \N  \\  r \geq c   }}
\sum_{\substack{ d\in \N   }}
\mu(d) 
\mu_{k+1}(dr )
\prod_{p\mid dr   }\frac{1}{(p^k -1 )^2}
.
\]
\end{lemma}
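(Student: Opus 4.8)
The plan is to expand the definition \eqref{def:ckN} of $\widetilde{Z}_k(N)$ and then reindex. First I would strip the coprimality condition by M\"obius inversion: using $\mathbf{1}[\gcd(m,q)=1]=\sum_{d\mid\gcd(m,q)}\mu(d)$ and swapping the sums over $m$ and $d$, one gets, for every $q\in\N$,
\[
\#\left\{ m \in \N \cap \left[1,\tfrac{q}{N}\right] : \gcd(m,q)=1 \right\}
=\sum_{d\mid q}\mu(d)\left\lfloor \frac{q}{Nd}\right\rfloor ,
\]
so that
\[
\widetilde{Z}_k(N)=\sum_{q\in\N}\mu_{k+1}(q)\prod_{p\mid q}\frac{1}{(p^k-1)^2}\sum_{d\mid q}\mu(d)\left\lfloor \frac{q}{Nd}\right\rfloor .
\]

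Next comes the point the introduction calls the slightly unusual use of the auxiliary variable. Since $d\mid q$, the integer $r:=q/d$ satisfies $\lfloor q/(Nd)\rfloor=\lfloor r/N\rfloor=\#\{c\in\N: N\mid c,\ c\le r\}$. Replacing the pair $(d,q)$ with $d\mid q$ by the pair $(d,r)\in\N^2$ with $q=dr$ (a bijection) turns the display above into
\[
\widetilde{Z}_k(N)=\sum_{d\in\N}\sum_{r\in\N}\mu(d)\mu_{k+1}(dr)\prod_{p\mid dr}\frac{1}{(p^k-1)^2}\,\#\{c\in\N: N\mid c,\ c\le r\} .
\]
I would then interchange the order of summation to bring $c$ outermost: the conditions $N\mid c$ and $c\le r$ become ``$c$ runs over the multiples of $N$'' and ``$r\ge c$'', which is exactly $\sum_{c\in\N,\ N\mid c} z_k(c)$. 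This is \eqref{eq:morbangel}.

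The one step that needs genuine justification, rather than bookkeeping, is the legitimacy of the two rearrangements, i.e. the absolute convergence of the double series defining $\sum_{N\mid c}z_k(c)$. I would bound it by inserting absolute values, using $\#\{c: N\mid c,\ c\le r\}\le r/N$, reindexing again by $q=dr$, and invoking $\sum_{d\mid q}|\mu(d)|/d=\prod_{p\mid q}(1+1/p)\le\tau(q)$; this reduces the claim to the finiteness of $\tfrac1N\sum_q \mu_{k+1}(q)\,q\,\tau(q)\prod_{p\mid q}(p^k-1)^{-2}$, which follows from an Euler product estimate in the spirit of \eqref{eq:absolute convergence}, the local factor at $p$ being $1+O(k/p^k)$ and hence summable for $k\ge 2$. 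I expect this convergence check to be the main (though still routine) obstacle; once it is in place the identity drops out of the reindexing above.
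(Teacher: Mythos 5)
Your proposal is correct and follows essentially the same route as the paper's proof: Möbius inversion for the coprimality condition, the reindexing $q=dr$, replacing $\lfloor r/N\rfloor$ by the count of multiples of $N$ up to $r$, and a final interchange of summation; your convergence check is in fact slightly more careful than the paper's (which simply cites the bound \eqref{eq:absolute convergence}), since you correctly account for the extra divisor sum over $d\mid q$ via the factor $\tau(q)$.
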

\begin{proof}
The changes in the order of summation in the following arguments are   justified 
by the absolute convergence of the sum in~\eqref{def:ckN}, 
which is proved in~\eqref{eq:absolute convergence}. 
The expression 
$$
\sum_{\substack{ d \in \N \\ d \mid m , d \mid q   }} \mu(d) 
$$ is the indicator function of the event $\gcd(m,q)=1$.
Injecting it into~\eqref{def:ckN} yields 
\[
\widetilde{Z}_k(N)
=
\sum_{d \in \N} \mu(d) 
\sum_{\substack { q\in \N  \\  d \mid q   }} 
\mu_{k+1}(q)
 \l [ \frac{q}{d N} \r ] 
 \prod_{\substack{ p\mid  q   } }
\frac{1}{(p^k -1 )^2}
 ,\] 
where $[x]$ denotes the integer part of a real number $x$.
The integers $q$ appearing above are of the form $d r $ for some $r \in \N$, 
hence, \[
\widetilde{Z}_k(N)
=
\sum_{d \in \N} \mu(d) 
\sum_{\substack { r \in \N     }} 
\mu_{k+1}(dr )
 \l [ \frac{r}{ N} \r ] 
 \prod_{\substack{ p\mid  d r     } }
\frac{1}{(p^k -1 )^2}
.\] 
We now replace the term $[r/N]$ by 
$
\#\{ c\in \N \cap [1,r] : c \equiv 0 \md {N }  \}
$,
thus obtaining 
\[
\widetilde{Z}_k(N)
=
\sum_{\substack{  c \in \N \\  N \mid c   }}
\sum_{\substack{  r \in \N  \\ r\geq c    }}
\sum_{d \in \N} \mu(d) 
 \mu_{k+1}(dr )
  \prod_{\substack{ p\mid  d r     } }
\frac{1}{(p^k -1 )^2}
.
\qedhere
\] 
 \end{proof}

\subsection{Analysing $\mathbf{z_k(c) } $}
\label{s:zkc2}
We   express
 $z_k(c)$ via the tail of a convergent series. 
\begin{lemma}
\label
{lem:factorising}
For any 
positive integer $c $ 
we have 
\[
z_k(c)
 =
\xi_k 
\sum_{\substack{ t  \in \N \cap [c^{1/k } , \infty )   }}   
\frac{  |\mu(t) | }{t^{2k}}
\prod_{p\mid t } \frac{1}{1-  \frac{2}{p^k } } 
,\]
where 
$
\xi_k
:=
\prod_{p } 
(1 -  (p^k -1 )^{-2} 
)$.  
\end{lemma}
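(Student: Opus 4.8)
The plan is to compute $z_k(c)$ by performing, for each fixed $r$, the inner summation over $d$ first, and to exploit the cancellation produced by the Möbius weight $\mu(d)$: that cancellation turns out to be total unless $r$ is the $k$-th power of a squarefree integer. The double series defining $z_k(c)$ is absolutely convergent — it is dominated by the series in \eqref{eq:absolute convergence}, since every integer $n$ has at most $\tau(n)\leq n$ factorisations $n=dr$ — so all interchanges of summation below are legitimate.

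First I would fix $r\in\N$ with $r\geq c$ and put
\[
S(r):=\sum_{d\in\N}\mu(d)\,\mu_{k+1}(dr)\prod_{p\mid dr}\frac{1}{(p^k-1)^2},
\qquad\text{so that}\qquad z_k(c)=\sum_{r\geq c}S(r).
\]
If $r$ is not $(k+1)$-free then $\mu_{k+1}(dr)=0$ for every $d$ and $S(r)=0$; so assume $r$ is $(k+1)$-free and write $r=\prod_p p^{b_p}$ with $0\leq b_p\leq k$. Only squarefree $d$ contribute, so $d=\prod_{p\in T}p$ for a finite set of primes $T$ with $\mu(d)=(-1)^{|T|}$, and $(k+1)$-freeness of $dr$ forces $b_p\leq k-1$ for every $p\in T$. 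Splitting $T=T_1\sqcup T_2$ with $T_1=T\cap\{p\mid r\}$ and $T_2=T\setminus\{p\mid r\}$, the primes dividing $dr$ are exactly $\{p\mid r\}\cup T_2$, so the summand factors as $\prod_{p\mid r}(p^k-1)^{-2}$ times a product over the choice of $T_1$ times a product over the choice of $T_2$.

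Next I would carry out the two independent sums. Summing over $T_1\subseteq\{p\mid r:\ 1\leq b_p\leq k-1\}$ — each such prime either lies in $T_1$, contributing the sign $-1$ (its factor $(p^k-1)^{-2}$ being present in either case, since $p\mid r$), or does not — produces $\prod_{p\mid r,\ 1\leq b_p\leq k-1}\bigl(1+(-1)\bigr)$, which vanishes unless there is no such prime, i.e.\ unless every prime dividing $r$ does so with exponent exactly $k$, that is $r=t^k$ for a squarefree $t$. In that remaining case $T_1=\emptyset$, and summing over the finite sets $T_2$ of primes \emph{not} dividing $t$ gives $\prod_{p\nmid t}\bigl(1-(p^k-1)^{-2}\bigr)$. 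Completing this to the full Euler product $\xi_k=\prod_p\bigl(1-(p^k-1)^{-2}\bigr)$ and using $(p^k-1)^2-1=p^k(p^k-2)$ gives
\[
S(t^k)=\prod_{p\mid t}\frac{1}{(p^k-1)^2}\cdot\frac{\xi_k}{\prod_{p\mid t}\bigl(1-(p^k-1)^{-2}\bigr)}=\xi_k\prod_{p\mid t}\frac{1}{p^k(p^k-2)}=\frac{\xi_k}{t^{2k}}\prod_{p\mid t}\frac{1}{1-2/p^k},
\]
the last equality using that $t$ is squarefree.

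Finally, since only the terms $r=t^k$ with $t$ squarefree survive in $z_k(c)=\sum_{r\geq c}S(r)$, and $t^k\geq c\iff t\geq c^{1/k}$, summing over such $t$ yields
\[
z_k(c)=\xi_k\sum_{\substack{t\in\N,\ t\geq c^{1/k}\\ t\ \mathrm{squarefree}}}\frac{1}{t^{2k}}\prod_{p\mid t}\frac{1}{1-2/p^k}=\xi_k\sum_{t\in\N\cap[c^{1/k},\infty)}\frac{|\mu(t)|}{t^{2k}}\prod_{p\mid t}\frac{1}{1-\frac{2}{p^k}},
\]
which is the assertion. The argument is essentially bookkeeping; the one genuinely delicate point — and where a slip is easiest — is keeping track, in the sum over $d$, of which primes force the ``$1+(-1)$'' collapse (namely those dividing $r$ to a positive power strictly below $k$) versus which yield the convergent factor ``$1-(p^k-1)^{-2}$'' (those not dividing $r$), together with checking that what remains after completing the Euler product is precisely $\xi_k$ — this last point being legitimate by the absolute convergence recorded in \eqref{eq:absolute convergence}.
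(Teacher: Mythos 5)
Your proof is correct. It rests on the same mechanism as the paper's — interchange the order of summation, evaluate the M\"obius-weighted sum over $d$ for fixed $r$, and observe that the resulting local factors $1+(-1)$ annihilate every $r$ except the $k$-th powers of squarefree integers, with the primes not dividing $r$ supplying the Euler product that completes to $\xi_k$ — but your bookkeeping is genuinely more direct. The paper parametrises the pair $(d,r)$ through the layered factorisation $r=\prod_{i=1}^k r_i^i$ with auxiliary variables $\delta_i=\gcd(r_i,d)$, $s_i$ and $d_0$, then recognises the inner sum as a multiplicative function of $r$ and evaluates it at prime powers; you instead fix $r$, write the squarefree $d$ as a set $T$ of primes, and split $T$ according to whether its primes divide $r$, which makes the vanishing condition (some prime divides $r$ to an exponent strictly between $0$ and $k$) and the surviving Euler product over $p\nmid r$ visible in one step. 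The two routes produce identical local computations, including the identity $(p^k-1)^2-1=p^k(p^k-2)$ used to convert $\prod_{p\mid t}(p^k-1)^{-2}\cdot\prod_{p\nmid t}\bigl(1-(p^k-1)^{-2}\bigr)$ into $\xi_k\,t^{-2k}\prod_{p\mid t}(1-2p^{-k})^{-1}$; your justification of the interchanges and of the infinite product over $T_2$ via the absolute convergence recorded in \eqref{eq:absolute convergence} is also sound. If anything, your version could be preferred for exposition, since it dispenses with the coprimality constraints among the $r_i,\delta_i,s_i$ that the paper must track.
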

\begin{proof}
The integers $r$ in the definition of $z_k(c)$ 
are $(k+1)$-free,
hence 
can be written uniquely as 
$
r= \prod_{i=1}^k r_i^i
,$ where $r_i\in \N$ are square-free and coprime in pairs.
The integer $d$   in the definition of $z_k(c)
$ is square-free and therefore coprime to  
$r_k$.
Therefore, letting   
$\delta_i:=\gcd(r_i ,d)$
we infer that 
 there are unique positive  integers  
$\delta_i, s_i, (0< i < k )$, $d_0 $ such that  
\[ 
r_i= \delta_i s_i \ \ (0< i < k ), \ \ d= d_0 \prod_{i=1}^ {k-1} \delta_i. \]
 Writing $m=r_k^k \prod_{i=1}^{k-1} (\delta_i s_i )^i $     
transforms $z_k(c)$ into 
\[
\sum_{ m \geq c 
 }
\l( \prod_{p\mid m    } (p^k -1 )^{-2} \r)
 \l(  \sum_{\substack{ 
d_0 \in \N  
\\
\gcd(d_0,  
 m )=1 
  }} 
\mu(d_0  )  
\prod_{p\mid  d_0   }\frac{1}{(p^k -1 )^2}\r)
\sum_{\substack{  
r_k \in \N  , \boldsymbol \delta \in \N^{k-1}, \b s \in \N^{k-1} 
\\
m=r_k ^k \prod_{i=1}^{k-1} (s_i \delta_ i )^i   
}}
\mu(\delta_1)  \cdots \mu(\delta_{k-1} )
,
\]
where the sum over $r_k , \boldsymbol \delta , \b s$ is subject to the conditions 
$\gcd( s_i \delta_i , s_j \delta_j ) =1 $ for all $i\neq j $
and  $\gcd( r_k  \delta_ i , s_i )  =1$ for all $i\neq k $. 
One can see that the sum over 
$r_k , \boldsymbol \delta , \b s$ forms  a multiplicative function of  $m $
and looking at its values at prime powers
makes  clear that it 
is   the indicator function of integers of the form 
$m=t^k $ with $t$ square-free. Indeed, for $1\leq j  < k $ we have 
\[\sum_{\substack{  
r_k \in \N  , \boldsymbol \delta \in \N^{k-1}, \b s \in \N^{k-1} 
\\
p^j=r_k ^k \prod_{i=1}^{k-1} (s_i \delta_ i )^i   
}}
\mu(\delta_1)  \cdots \mu(\delta_{k-1} )
=\sum_{\substack{   s_j, \delta_j \in \N 
\\
p^j = (s_j \delta_ j )^j  
}}
\mu(\delta_j)=\sum_{\delta_j \mid p}    
\mu(\delta_j)=0 
\] since all other variables in the   sum must equal  $1$.
We thus obtain 
   \[z_k(c)  
= \sum_{ t \geq c^{1/k }   }
|\mu(t) |  \l( \prod_{p\mid  t     } (p^k -1 )^{-2} \r) 
   \sum_{\substack{ 
d_0 \in \N  
\\
\gcd(d_0,  t )=1 
  }} 
\mu(d_0  )  
\prod_{p\mid  d_0   } (p^k -1 )^{-2}  , \]
The proof concludes by writing the sum over $d_0 $ as an Euler product
and using $t^{-2k}=\prod_{p\mid t}p^{-2k}$.\end{proof}
 
\begin{proposition}
\label
{prop:final step}
There exists a 
constant 
$\gamma >0$
such that  
for all  $k>1 $
and 
$u \geq 1 $
we have \[ \sum_{\substack{  
1\leq 
  t \leq u  
}}
|\mu( t )|      \prod_{p\mid t } \frac{1}{1-2p^{-k}} 
= \gamma _k u
 +O_k\l(
\frac{  
u^{1/2} } { \exp\l( \gamma
(\log  u  )^{3/5}
(\log \log u )^{-1/5}
\r)   }
\r ) 
,
\]  
where 
 the implied constant depends at most on $k$
and 
\[
\gamma_k :=  \frac{1 }{\zeta(2) }
 \prod_p \l( 1+\frac{   2    }{(p+1)  (p^k -2)}\r )
 .\]   \end{proposition}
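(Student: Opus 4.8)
The plan is to realise the multiplicative function
$f(t):=|\mu(t)|\prod_{p\mid t}(1-2p^{-k})^{-1}=\mu^2(t)\prod_{p\mid t}\frac{p^k}{p^k-2}$
(so $f(p)=\frac{p^k}{p^k-2}$ and $f(p^j)=0$ for $j\ge2$) as a Dirichlet convolution $f=w\ast\mu^2$, and then to apply Walfisz's theorem on squarefree numbers to $\sum_{m\le x}\mu^2(m)$. Here $w$ is the multiplicative function with $w(1)=1$ and $w(p^j)=(-1)^{j-1}\frac{2}{p^k-2}$ for all $j\ge1$; since both sides are multiplicative, to verify $f=w\ast\mu^2$ it suffices to check at prime powers: $(w\ast\mu^2)(p)=1+\frac{2}{p^k-2}=\frac{p^k}{p^k-2}$, and for $j\ge2$ only the two terms $w(p^{j})\mu^2(1)$ and $w(p^{j-1})\mu^2(p)$ survive in $\sum_{i=0}^{j}w(p^i)\mu^2(p^{j-i})$ and they cancel. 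The reason for this particular $w$ is that $|w(n)|$ then depends only on $\rad(n)$, so
\[
\sum_{n\ge1}\frac{|w(n)|}{n^s}=\prod_p\Bigl(1+\frac{2}{p^k-2}\cdot\frac{p^{-s}}{1-p^{-s}}\Bigr);
\]
the $p$-th factor is $1+O_k(p^{-k-\Re s})$, so with $k>1$ this converges for every $\Re s>0$, giving $\sum_n|w(n)|n^{-\sigma}\ll_k1$ for each $\sigma\ge\tfrac14$ and, by Rankin's trick, the tail bounds $\sum_{n>D}|w(n)|n^{-1}\ll_kD^{-3/4}$ and $\sum_{n>D}|w(n)|n^{-1/2}\ll_kD^{-1/4}$. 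The alternating sign also yields $\sum_{j\ge1}(-1)^{j-1}p^{-j}=\tfrac1{p+1}$, hence $\sum_n\frac{w(n)}{n}=\prod_p\bigl(1+\tfrac{2}{(p+1)(p^k-2)}\bigr)$ and therefore $\frac1{\zeta(2)}\sum_n\frac{w(n)}{n}=\gamma_k$; this accounts for the precise leading constant.

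With these preliminaries in place I would argue as follows, assuming (as one may) that $u$ exceeds an absolute constant, since for bounded $u$ both sides are $O_k(1)$ and the estimate is trivial. Writing $S(u):=\sum_{t\le u}f(t)$, the convolution gives $S(u)=\sum_{n\le u}w(n)\sum_{m\le u/n}\mu^2(m)$. Substitute $\sum_{m\le x}\mu^2(m)=\frac{x}{\zeta(2)}+E(x)$, using Walfisz's bound $E(x)\ll x^{1/2}\exp(-c(\log x)^{3/5}(\log\log x)^{-1/5})$ for $x\ge3$ (the step that rests on the zero-free region of $\zeta$) together with the elementary bound $E(x)\ll x^{1/2}$ valid for all $x\ge1$. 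The main term $\frac{u}{\zeta(2)}\sum_{n\le u}\frac{w(n)}{n}$ equals $\gamma_ku-\frac{u}{\zeta(2)}\sum_{n>u}\frac{w(n)}{n}=\gamma_ku+O_k(u^{1/4})$ by the first tail bound. For $\sum_{n\le u}w(n)E(u/n)$ I would split at $n=u^{1/2}$. When $n\le u^{1/2}$ one has $u/n\ge u^{1/2}$, hence $\log(u/n)\ge\tfrac12\log u$ and $\log\log(u/n)\le\log\log u$, so Walfisz gives $|E(u/n)|\ll (u/n)^{1/2}\exp(-\gamma(\log u)^{3/5}(\log\log u)^{-1/5})$ with $\gamma:=c\,2^{-3/5}$ absolute; summing against $|w(n)|$ and using $\sum_n|w(n)|n^{-1/2}\ll_k1$ bounds this range by $\ll_k u^{1/2}\exp(-\gamma(\log u)^{3/5}(\log\log u)^{-1/5})$. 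When $u^{1/2}<n\le u$ the crude bound $|E(u/n)|\ll(u/n)^{1/2}$ gives $\ll u^{1/2}\sum_{n>u^{1/2}}|w(n)|n^{-1/2}\ll_ku^{3/8}$. Since $\gamma(\log u)^{3/5}(\log\log u)^{-1/5}=o(\log u)$, both $u^{1/4}$ and $u^{3/8}$ are $\ll_k u^{1/2}\exp(-\gamma(\log u)^{3/5}(\log\log u)^{-1/5})$ for large $u$, and collecting everything proves the proposition with this absolute $\gamma$.

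The one spot that needs an idea is the choice of convolution: I want $w$ so that $|w(n)|$ is a function of $\rad(n)$ alone, which forces the sign pattern $w(p^j)=(-1)^{j-1}\frac{2}{p^k-2}$, because this is exactly what makes every Dirichlet series in $w$ converge with room to spare for $\Re s>0$, so the whole error budget can be serviced by a single appeal to Walfisz, while the very same sign produces the Euler product defining $\gamma_k$. The only other point requiring care is that passing from $E(x)$ to $E(u/n)$ must not erode the Vinogradov--Korobov factor; this is precisely why the split is placed at $u^{1/2}$, where the elementary monotonicity inequalities $\log(u/n)\ge\tfrac12\log u$ and $(\log\log(u/n))^{-1/5}\ge(\log\log u)^{-1/5}$ hold. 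The remaining ingredients — the Rankin tail bounds, the elementary estimate $E(x)\ll x^{1/2}$, and the evaluation of $\sum_n w(n)/n$ — are routine.
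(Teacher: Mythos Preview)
Your argument is correct. Both your approach and the paper's ultimately rest on Walfisz's theorem, but the implementations differ. The paper writes $\prod_{p\mid m}(1+\delta(p))=\sum_{d\mid m}\mu(d)^2\delta(d)$ with $\delta(p)=\frac{2}{p^k-2}$, which leads to an inner sum $\sum_{m'\le u/d,\ \gcd(m',d)=1}\mu(m')^2$; the coprimality constraint then forces a separate intermediate result (Corollary~\ref{cor:walf}), itself proved by a further convolution with a function $g_a$ supported on integers whose prime factors divide $a$. Your choice of $w$ with $w(p^j)=(-1)^{j-1}\frac{2}{p^k-2}$ for \emph{all} $j\ge1$---supported on all integers rather than only squarefree ones---collapses these two layers into one: no coprimality condition arises, and a single appeal to Walfisz suffices. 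This is more economical. The paper's route has the modest compensating advantage that its intermediate Lemma~\ref{lem:the last lemma} is stated for a general $\delta$ with $|\delta(p)|\le 4/p^2$ and with an absolute implied constant, so it can be reused verbatim (as it is under RH in \S\ref{s:under rieman}); your argument, as written, gives an $O_k$ constant, which is all the proposition requires.
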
 
We conclude this 
section by 
deducing  
Theorem~\ref{thm:main}
from
Proposition~\ref{prop:final step}.  
Define 
\beq
{eq:foryoursafety} 
{
c_k := \frac{2k}{(2k-1) }
\frac{\zeta(2-1/k) }{\zeta(2) }
\zeta(k)^2
\prod_{\substack{ p \\ }}  \l(1-\frac{2p}{(p+1) p^{ k} }\r) 
.}
\begin{proof}
[Proof of Theorem~\ref{thm:main}]
  Lemma \ref{lem:factorising},
Proposition \ref{prop:final step} and 
Abel's summation formula give \[
\frac{z_k(c)}{\xi_k}
=\frac{ \gamma _k }{ (2k-1 ) } 
\frac{ 1 }{ c^{2-1/k }   } 
 +O_k\l(
\frac{  c^{-2+1/(2k)} } 
{ 
\exp\l( \gamma k^{-1 } 
(\log  c  )^{3/5}
(\log \log c )^{-1/5}
\r)   
}
\r ) .\] 
Feeding this 
into
Lemma  \ref{lem:the new variable} 
produces
\[\widetilde{Z}_k(N)
=
\sum_{\substack{ b \in \N   }}    z_k(N b)
=
  \frac{ \gamma_k  \xi_k }{ (2k  - 1  )    }
 \frac{ \zeta\l(2-\frac{1}{k} \r)}{N^{2-\frac{1}{k}  }   }
+O_k\l(
 \frac{ 
 N^{-2+1/(2k)} 
} { \exp\l( \gamma 
k^{-1} 
(\log  N  )^{3/5}
(\log \log N )^{-1/5}
\r)   }
\r)
.\] The leading constant can be turned into the form of Theorem~\ref{thm:main} by noting that 
\[ 
\frac{ \gamma_k   \xi_k   }{(2k-1) } \zeta(2-1/k)
  =\frac{c_k}{2k }
.\]
Finally, 
invoking Lemma~\ref{lem:first approxim}
concludes the proof 
because the inequality $N \leq \frac{1}{\epsilon} <1+N$ implies 
that both
$
(N+1) ^{-2+\frac{1}{k}  }   
$
and 
$  N^{-2+ \frac{1}{k}  }     $ are
$
\epsilon^{2-\frac{1}{k}  } 
+
O_k(\epsilon^{3-\frac{1}{k}  }   )
$. 
\end{proof}
 \subsection{Zero-free region information}
\label
{s:summoning the demons}
We now
prove
Proposition~\ref{prop:final step}
by using
the following result, which 
is based 
on
the best known 
zero-free region for the Riemann zeta function.
\begin{lemma}
[Walfisz,~\cite{MR0220685}]
\label
{lem:walf} 
 There exists an absolute constant $
\gamma_0
>0$
such that 
 \[
\sum_{n \in \N \cap [1,x] }
\mu(n)^2
=
\frac{x}{\zeta(2) }
+O
\l(
x^{\frac{1}{2}}
\exp\l(-\gamma_0
(\log x )^{3/5}
(\log \log x)^{-1/5}
\r)
\r)
.\]
\end{lemma}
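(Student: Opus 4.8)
The plan is to reduce the squarefree count up to $x$ to a bound on the Mertens function $M(t):=\sum_{n\leq t}\mu(n)$, and then to feed in the strongest available bound on $M$, which is itself a consequence of the Vinogradov--Korobov zero-free region for $\zeta$. I abbreviate $E(t):=(\log t)^{3/5}(\log\log t)^{-1/5}$ throughout and assume $x$ large, the bounded range being trivial.

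First I would use the identity $\mu(n)^2=\sum_{d^2\mid n}\mu(d)$, valid for every $n\in\N$, to get $\sum_{n\leq x}\mu(n)^2=\sum_{d\leq\sqrt x}\mu(d)\lfloor x/d^2\rfloor$. Fix a parameter $1\leq y<\sqrt x$, to be optimised at the end, and split the $d$-sum at $y$. For $d\leq y$ I replace $\lfloor x/d^2\rfloor$ by $x/d^2+O(1)$ and complete the series $\sum_{d\geq1}\mu(d)d^{-2}=1/\zeta(2)$; the resulting tail $x\sum_{d>y}\mu(d)d^{-2}$ is treated by partial summation against $M$ and is $O\big(xy^{-1}\exp(-cE(y))\big)$, where $c>0$ is the constant in the Mertens bound below. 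For $y<d\leq\sqrt x$ I would unfold $\lfloor x/d^2\rfloor=\#\{m\in\N:\,d^2m\leq x\}$ and interchange summations: the inner sum over $d$ is nonempty only for $m<x/y^2$, and then equals $M(\sqrt{x/m})-M(y)$, so that block is $\sum_{m<x/y^2}\big(M(\sqrt{x/m})-M(y)\big)$. Since $\sqrt{x/m}\geq y$ on this range and $E$ is eventually increasing, the Mertens bound gives $|M(\sqrt{x/m})|\ll\sqrt{x/m}\exp(-cE(y))$ and $|M(y)|\ll y\exp(-cE(y))$, so summing $m^{-1/2}$ over $m<x/y^2$ bounds that block by $O\big(xy^{-1}\exp(-cE(y))\big)$ too. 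Altogether
\[
\sum_{n\leq x}\mu(n)^2=\frac{x}{\zeta(2)}+O(y)+O\Big(\frac{x}{y}\exp(-cE(y))\Big).
\]
Because $\log y=\tfrac{1}{2}(1+o(1))\log x$ forces $E(y)\geq c_1E(x)$ for an absolute $c_1>0$, taking $y=\lfloor\sqrt x\exp(-c_2E(x))\rfloor$ with $c_2:=cc_1/2$ balances the two error terms and yields exactly the stated estimate, with $\gamma_0:=cc_1/2>0$ absolute.

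It remains to establish $M(x)\ll x\exp(-cE(x))$. Here I would apply the truncated Perron formula to $1/\zeta(s)=\sum_n\mu(n)n^{-s}$ with abscissa $1+1/\log x$ and truncation height $T$, then shift the contour leftwards to $\Re s=1-\eta(T)$ (with horizontal segments at $\Im s=\pm T$), where $\eta(T)\asymp(\log T)^{-2/3}(\log\log T)^{-1/3}$ is the width of the Vinogradov--Korobov zero-free region at height $T$, so that the strip $|\Im s|\leq T$ lies inside it; no residue is collected, as $\zeta$ has a pole, not a zero, at $s=1$. On the shifted line one invokes the companion estimate $1/\zeta(s)\ll(\log|\Im s|)^{2/3}(\log\log|\Im s|)^{1/3}$ for $|\Im s|$ large, which on integrating against $dt/|t|$ up to $T$ contributes $\ll x^{1-\eta(T)}(\log T)^{5/3}(\log\log T)^{1/3}$, while the horizontal segments and the Perron truncation error contribute $\ll x(\log x)^{2}T^{-1}$ up to lower-order factors. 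Balancing $x^{-\eta(T)}$ against $T^{-1}$ forces $\log T\asymp\eta(T)\log x$, hence $(\log T)^{5/3}(\log\log T)^{1/3}\asymp\log x$, hence $\log T\asymp(\log x)^{3/5}(\log\log x)^{-1/5}$ and $\eta(T)\log x\asymp(\log x)^{3/5}(\log\log x)^{-1/5}$ --- this is precisely where the exponent $3/5$ (and the $-1/5$) in the statement originates. Absorbing the surviving powers of $\log$ into the exponential with a slightly smaller constant gives $M(x)\ll x\exp(-cE(x))$.

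The real obstacle is this Mertens step, not the elementary reduction: getting the exponent $3/5$ --- rather than the $1/3$ delivered by the naive choice $T=x$, or the $1/2$ coming from a de la Vall\'ee Poussin region --- requires the full Vinogradov--Korobov zero-free region together with its associated upper bound for $1/\zeta$, and the optimisation of $T$ against $\eta(T)$ above. This is exactly the content of Walfisz's theorem, which is why the lemma is quoted from~\cite{MR0220685} rather than reproved here.
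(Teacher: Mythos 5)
The paper offers no proof of this lemma at all: it is imported verbatim as a known theorem of Walfisz, exactly as your final paragraph anticipates. Your sketch is nonetheless a correct outline of how the result is established. The elementary reduction is sound: writing $\mu(n)^2=\sum_{d^2\mid n}\mu(d)$, splitting at $y$, unfolding $\lfloor x/d^2\rfloor$ for $y<d\leq\sqrt x$ so that the block becomes $\sum_{m<x/y^2}\bigl(M(\sqrt{x/m})-M(y)\bigr)$, and balancing $O(y)$ against $O\bigl(xy^{-1}\exp(-cE(y))\bigr)$ with $y=\sqrt x\exp(-c_2E(x))$ does deliver the stated error term, and you correctly locate the only non-elementary input in the Mertens-type bound $M(x)\ll x\exp(-cE(x))$, whose derivation from the Vinogradov--Korobov zero-free region (including the optimisation $\log T\asymp(\log x)^{3/5}(\log\log x)^{-1/5}$ that produces the exponents $3/5$ and $-1/5$) you describe accurately. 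The only caveats are presentational: Walfisz's own treatment in the cited monograph works somewhat more directly with the squarefree count rather than routing everything through $M(x)$, and in the contour-shift step one should acknowledge that the zero-free region's width varies with height, so the ``shifted line'' is really a curve (or one truncates more carefully); but these are standard technicalities, not gaps. Since the paper's proof is simply the citation, your proposal and the paper ultimately rest on the same external input, with your reduction serving as a correct expansion of what that citation conceals.
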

We shall later need a stronger version of Lemma \ref{lem:walf}. 
\begin{corollary}
\label
{cor:walf} 
There exists an absolute constant $\gamma'>0$
such that  
 for every  $a\in \N, x\geq 1 $ 
we have 
\[
\sum_{\substack {   n \in \N \cap [1,x] \\ \gcd(n,a)=1 } }  \mu(n)^2
=
\l(\prod_{p\mid a } \l(1+\frac{1}{p}\r)^{-1 }\r)
\frac{x}{\zeta(2) }
+O
\l(
\tau(a)^3
x^{\frac{1}{2}}
\exp\l(-\gamma'
(\log x )^{3/5}
(\log \log x)^{-1/5}
\r)
\r)
,\]
where the implied constant 
is absolute.
\end{corollary}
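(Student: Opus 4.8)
Write $V_a(x)$ for the left-hand side. Since $V_a(x)=V_{\rad(a)}(x)$, $\tau(\rad(a))\le\tau(a)$, and the product over $p\mid a$ depends only on $\rad(a)$, it suffices to treat squarefree $a$, and I would do this by induction on $\omega(a)$, the number of prime divisors of $a$. The case $\omega(a)=0$, i.e. $a=1$, is precisely Lemma~\ref{lem:walf}, with $\gamma'$ equal to its constant $\gamma_0$.

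For the inductive step write $a=pa'$ with $p$ prime, $a'$ squarefree, $p\nmid a'$, so $\omega(a')=\omega(a)-1$. Splitting the sum over $n$ according to whether $p\mid n$, and using $\mu(pm)^2=\mu(m)^2$ when $p\nmid m$, gives $V_a(x)=V_{a'}(x)-V_a(x/p)$; iterating until $p^j>x$ yields the finite identity
\[
V_a(x)=\sum_{\substack{j\ge 0\\ p^j\le x}}(-1)^j\,V_{a'}\!\left(\frac{x}{p^j}\right).
\]
Into this I would feed the inductive hypothesis $V_{a'}(y)=\frac{C'}{\zeta(2)}y+R(y)$, with $C'=\prod_{q\mid a'}(1+1/q)^{-1}$ and $|R(y)|\le A\,\tau(a')^3\,y^{1/2}\exp(-\gamma_0\,h(y))$, where $h(y):=(\log y)^{3/5}(\log\log y)^{-1/5}$, valid for $y$ above an absolute constant (and $R(y)=O(1)$ below it). The main term is a geometric series, $\sum_{j\ge 0,\ p^j\le x}(-1/p)^j=(1+1/p)^{-1}+O(1/x)$, and since $C'/\big(\zeta(2)(1+1/p)\big)=\zeta(2)^{-1}\prod_{q\mid a}(1+1/q)^{-1}$ this reproduces the claimed main term up to an error $O(1)$.

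It remains to control $\sum_{j\ge 0,\ p^j\le x}|R(x/p^j)|$, and I would split the $j$-range three ways. For $j=0$ the contribution is $A\,\tau(a')^3 x^{1/2}\exp(-\gamma_0 h(x))$. For $1\le j$ with $p^j\le\sqrt x$ one has $\log(x/p^j)\ge\tfrac12\log x$, and an elementary bound on $h'$ gives $h(x)-h(x/p^j)\ll(\log x)^{-2/5}j\log p$; hence the $j$-th summand, divided by $x^{1/2}\exp(-\gamma_0 h(x))$, is at most $A\,p^{-j/2}\exp\!\big(O(\gamma_0(\log x)^{-2/5}j\log p)\big)\le A\,p^{-j/4}$ once $x$ exceeds an absolute constant, and $\sum_{j\ge 1}p^{-j/4}$ is an absolute number $<8$. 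For $p^j>\sqrt x$ one has $\sqrt{x/p^j}<x^{1/4}$ and the exponential is $\le 1$ (apart from $O(1)$ terms with $x/p^j$ bounded, of which there are only $O(1)$), so this range contributes $O(A)\,\tau(a')^3 x^{1/4}+O(1)$, which is $\le x^{1/2}\exp(-\gamma_0 h(x))$ for $x$ large. Summing, $V_a(x)$ equals its main term up to $\kappa A\,\tau(a')^3 x^{1/2}\exp(-\gamma_0 h(x))+O(1)$ for some absolute $\kappa<8$.

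I expect the genuine difficulty to be not in these estimates but in keeping $\gamma'$ and the implied constant \emph{absolute}, i.e. independent of $\omega(a)$: applying the inductive hypothesis naively loses a fixed factor from $\gamma_0$ at each of the $\omega(a)$ steps, which would leave the final constant depending on $a$. The remedy is to carry the explicit constant $A$ through the induction and exploit $\tau(a')^3=\tau(a)^3/8$: the bound above becomes $\tfrac{\kappa}{8}A\,\tau(a)^3 x^{1/2}\exp(-\gamma_0 h(x))+O(1)\le A\,\tau(a)^3 x^{1/2}\exp(-\gamma_0 h(x))$ for $x$ beyond an absolute threshold, since $\kappa<8$, so the induction closes with the very same $A$ and $\gamma_0$. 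For $x$ below that threshold the inequality is trivial after enlarging $A$ once (using $V_a(x)\le x$), subject to the usual convention that the estimate is asserted only where the exponential factor is meaningful and at most $1$.
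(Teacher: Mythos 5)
Your plan is correct, but it reaches the corollary by a genuinely different route from the paper. The paper encodes the coprimality condition once and for all via the Dirichlet convolution $\mathds 1_{\gcd(n,a)=1}\mu(n)^2=(\mu^2\ast g_a)(n)$, where $g_a$ is supported on integers all of whose prime factors divide $a$, truncates the convolution at $c\le x^{7/10}$, and applies Lemma~\ref{lem:walf} to $\sum_{d\le x/c}\mu(d)^2$ for all such $c$ simultaneously; the factor $\tau(a)^3$ appears as a bound for Euler products of the shape $\prod_{p\mid a}(1-p^{-1/4})^{-1}\le 8^{\omega(a)}$, and the absolute constant $\gamma'=5^{-3/5}\gamma_0$ comes from the single replacement of $\log(x/c)$ by $\log x$. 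Your induction on $\omega(a)$ unwinds the same combinatorial identity one prime at a time, and its only real danger is exactly the one you flag: a fixed multiplicative loss per step would leave the final constants depending on $\omega(a)$. Your remedy is sound: the per-step inflation $\kappa\approx 1+\sum_{j\ge1}2^{-j/4}<8$ is beaten by $\tau(a)^3/\tau(a')^3=8$, so the induction closes with the same $A$ and even the undiminished exponent constant $\gamma_0$, which is marginally better than the paper's $\gamma'$. Two details to tighten in a written version: the range $p^j>\sqrt{x}$ contains up to $\asymp\log x$ values of $j$, so its contribution is $O(A\,\tau(a')^3x^{1/4}\log x)+O(1)$ rather than $O(A\,\tau(a')^3x^{1/4})+O(1)$ (still harmless); and this term, together with the various $O(1)$'s from the truncated geometric series and the bounded arguments $x/p^j=O(1)$, must be absorbed into a small fixed fraction of $A\,\tau(a)^3x^{1/2}\exp(-\gamma_0 h(x))$ for $x$ above an absolute threshold, so as not to erode the margin $\kappa/8<1$. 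The paper's one-shot convolution avoids all of this constant bookkeeping, which is what it buys; your induction buys a cleaner reduction to the case $a=1$ and a slightly better value of $\gamma'$.
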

\begin{proof}
The Dirichlet series of 
$\mathds 1 _{\gcd(a,n)=1}(n)
\mu(n)^2 $ is 
\[
\sum_{\substack {   n  =1 \\ \gcd(n,a)=1 } }^\infty
\frac{\mu(n)^2}{n^s }
  =
\prod_{p  } \l(1 +\frac{1}{p^s } \r )
\prod_{p\mid a } 
\l(1 +\frac{1}{p^s } \r )^{-1} 
=
\prod_{p  } \l(1 +\frac{1}{p^s } \r )
\prod_{p\mid a } 
\l(1 +\sum_{k=1}^\infty
\frac{(-1)^k }{p^{ks} } \r )
.
\] This is the product of the Dirichlet series 
of $\mu(n)^2 $ 
by  the Dirichlet series of the multiplicative 
 function $g_a(n)$, where 
\[
g_a(n):=
\mathds 1 _{p \mid n \Rightarrow p\mid a  }(n)
(-1)^{\Omega(n)
}
\]
and $\Omega(n)$ is the number of prime divisors of $n$ 
counted with multiplicity.
We get 
\[\mathds 1 _{\gcd(a,n)=1}(n)
\mu(n)^2
=
(\mu^2 \ast g_a)(n)
=\sum_{\substack {  c,d \in \N \\ cd =n  }}
g_a(c)
\mu(d)^2
,\] 
where  
$\ast$ is the Dirichlet convolution.
Hence,
we 
can write 
\[
\sum_{\substack {   n \in \N \cap [1,x] \\ \gcd(n,a)=1 } }  \mu(n)^2
=
\sum_{1\leq c \leq x }   g_a(c)
\sum_{1\leq d \leq x/c}   \mu(d)^2
.
\]
Let $Y:=x^{7/10}$.
The terms with 
 $
Y<
c\leq x$
contribute at most
 \[x
\sum_{c>
Y
}
\frac{ |g_a(c) | }{c} 
\leq 
\frac{x}{Y^{3/4}}
\sum_{c>
Y  
}
\frac{ |g_a(c) | }{c^{1/4} } 
\leq \frac{x}{Y^{3/4}}
\prod_{p\mid a  } \l(1+\sum_{k=1}^\infty   \frac{1}{p^{k/4}}   \r)
\leq
 \frac{x}{Y^{3/4}}
\l (\prod_{p\mid a } 8 \r )
\leq
 \frac{x  \tau(a)^3}{Y^{3/4}}
,\] which equals  $
 \tau(a)^3
x^{\frac{19}{40 } }
$.
By 
Lemma~\ref{lem:walf},
the terms with $c\leq Y$ 
contribute
\[
\frac{x}{\zeta(2)  }
\sum_{1\leq c \leq Y }  \frac{  g_a(c) }{c }
  +O
\l(
x^{\frac{1}{2}}
\sum_{1\leq c \leq Y }  \frac{  |g_a(c) |}{c^{\frac{1}{2}}  }
\exp\l(-\gamma_0
(\log x/c )^{3/5}
(\log \log x/c)^{-1/5}
\r)
\r)
.\]
Note that $x/c \geq x^{3/10}$, therefore, 
\[
(\log x/c )^{3/5}
(\log \log x/c)^{-1/5}
\geq 
5^{-3/5}
(\log x )^{3/5}
(\log \log x/c)^{-1/5}
.\]
Letting
 $\gamma'
:=5^{-3/5}\gamma_0$
 we infer that 
the error term contribution
is 
\[
\ll 
x^{\frac{1}{2}}
\exp\l(-\gamma
(\log x)^{3/5}
(\log \log x)^{-1/5}
\r) 
\sum_{1\leq c \leq Y }  \frac{  |g_a(c) |}{c^{\frac{1}{2}}  }
\ll 
\tau(a)^3
x^{\frac{1}{2}}
\exp\l(-\gamma'
(\log x)^{3/5}
(\log \log x)^{-1/5}
\r) 
.\]
To complete the summation over $c>Y$
we use the estimate  
 \[
x\sum_{  c >  Y }  \frac{|  g_a(c) |}{c }
\ll 
 \tau(a)^3
x^{\frac{19}{40 } }
 \] that was 
 proved   earlier in this proof. Finally, 
the proof is concluded by noting that 
 \[
\sum_{c \in \N } \frac{g_a(c)  }{ c }
=
\prod_{p\mid a } \l(1+\frac{1}{p}\r)^{-1 }
.
\qedhere
\]
\end{proof}
The following result 
is a generalisation of 
Lemma~\ref{lem:walf} 
and its proof uses 
Corollary~\ref{cor:walf}.
\begin{lemma}
\label
{lem:the last lemma}
Let   
$\delta:\N \to \R$ be a multiplicative function 
with  $|  \delta(p)   |
\leq  \frac{4}{p^2} $    
for every  prime   $p$.
There exists a
positive absolute constant $\gamma$ such that for all $u\geq 1 $ we have  
\[
\sum_{\substack { 1\leq m \leq u   }} 
\mu(m)^2 \prod_{p\mid m  } (1+\delta(p) ) 
 =
\l(
\prod_p \l( 1+\frac{\delta(p)   }{p+1 }\r )
\r)\frac{u}{\zeta(2) } +O\l(
\frac{  
u^{1/2} } { \exp\l( \gamma
(\log  u  )^{3/5}
(\log \log u )^{-1/5}
\r)   }
\r )
,\]
 where the 
implied constant is absolute.
\end{lemma}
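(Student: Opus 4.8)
The plan is to reduce to Corollary~\ref{cor:walf} by a Dirichlet convolution. Since the factor $\mu(m)^2$ restricts the sum to squarefree $m$, and for squarefree $m$ one has the elementary identity $\prod_{p\mid m}(1+\delta(p))=\sum_{d\mid m}h(d)$, where $h$ is the multiplicative function determined by $h(p)=\delta(p)$ and $h(p^j)=0$ for $j\geq2$, I would first rewrite the left-hand side of the lemma as
\[
\sum_{\substack{d\leq u\\ \mu(d)^2=1}} h(d)\sum_{\substack{1\leq \ell\leq u/d\\ \gcd(\ell,d)=1}}\mu(\ell)^2,
\]
using that $d\mid m$ with $m$ squarefree forces $m=d\ell$ with $\ell$ squarefree and coprime to $d$, and that $h$ is supported on squarefree integers. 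Throughout I would use the crude bound $|h(d)|\leq\prod_{p\mid d}4p^{-2}=4^{\omega(d)}d^{-2}=\tau(d)^2 d^{-2}$, valid for squarefree $d$.

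Next I would split the $d$-sum at $Y:=u^{7/10}$. For $d>Y$ the inner sum is at most $u/d$, so this range contributes $\ll u\sum_{d>Y}\tau(d)^2 d^{-3}\ll u\,Y^{-2+\epsilon}=u^{-2/5+\epsilon}$, which is negligible against the target error term. For $d\leq Y$ I would feed in Corollary~\ref{cor:walf} with $a=d$ and $x=u/d\geq u^{3/10}$. Its main term contributes
\[
\frac{u}{\zeta(2)}\sum_{\substack{d\leq Y\\ \mu(d)^2=1}}\frac{h(d)}{d}\prod_{p\mid d}\left(1+\frac1p\right)^{-1}
=\frac{u}{\zeta(2)}\sum_{\substack{d\leq Y\\ \mu(d)^2=1}}\prod_{p\mid d}\frac{\delta(p)}{p+1},
\]
and completing this to a sum over all squarefree $d$ — the omitted tail being $\ll u\sum_{d>Y}\tau(d)^2 d^{-3}$, again negligible — turns it into $\frac{u}{\zeta(2)}\prod_p\left(1+\frac{\delta(p)}{p+1}\right)$, exactly the asserted main term; this product converges absolutely because $|\delta(p)/(p+1)|\ll p^{-3}$.

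It then remains to bound the error term of Corollary~\ref{cor:walf} summed over $d\leq Y$, which is
\[
\ll\sum_{\substack{d\leq Y\\ \mu(d)^2=1}}\tau(d)^3|h(d)|\left(\frac{u}{d}\right)^{1/2}\exp\left(-\gamma'\left(\log\tfrac{u}{d}\right)^{3/5}\left(\log\log\tfrac{u}{d}\right)^{-1/5}\right).
\]
The key point is that $u/d\geq u^{3/10}$ for $d\leq Y$, so $\log(u/d)\geq\tfrac{3}{10}\log u$ while $\log\log(u/d)\leq\log\log u$, whence the exponential is at most $\exp\left(-\gamma''(\log u)^{3/5}(\log\log u)^{-1/5}\right)$ with $\gamma'':=(3/10)^{3/5}\gamma'$. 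Pulling this factor out and using $\sum_{d\geq1}\tau(d)^3|h(d)|d^{-1/2}\leq\sum_{d\geq1}\tau(d)^5 d^{-5/2}<\infty$ bounds this range by $O\left(u^{1/2}\exp(-\gamma''(\log u)^{3/5}(\log\log u)^{-1/5})\right)$. Collecting the three pieces and taking $\gamma$ to be any positive constant smaller than $\gamma''$ (so that the $u^{-2/5+\epsilon}$ leftovers are absorbed for $u$ large, the range of bounded $u$ being trivial after enlarging the implied constant) completes the argument.

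The step I expect to require the most care is the placement of the cutoff $Y$ and the bookkeeping around it: $Y$ must be large enough that the trivial tail estimates over $d>Y$ are swallowed by the target error term, yet small enough that neither the loss $(3/10)^{3/5}$ in the exponent nor the extra factor $\tau(d)^3$ coming from Corollary~\ref{cor:walf} degrades the Walfisz-type saving; one also has to check that the various $h$-weighted Dirichlet sums $\sum_d\tau(d)^2 d^{-3}$, $\sum_d\tau(d)^5 d^{-5/2}$ and $\prod_p(1+\delta(p)/(p+1))$ converge, which they do thanks to the hypothesis $|\delta(p)|\leq 4p^{-2}$. The exponent $7/10$ is merely convenient; any exponent strictly between $1/2$ and $1$ works with trivial modifications.
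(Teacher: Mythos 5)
Your proposal is correct and follows essentially the same route as the paper: the same Dirichlet-convolution identity $\prod_{p\mid m}(1+\delta(p))=\sum_{d\mid m}h(d)$ reducing to Corollary~\ref{cor:walf} with $a=d$, $x=u/d$, a dyadic-free cutoff in $d$ (you take $u^{7/10}$, the paper takes $u^{3/4}$ — immaterial, as you note), trivial tail estimates via $|h(d)|\leq\tau(d)^2d^{-2}$, and the same observation that $\log(u/d)\gg\log u$ preserves the Walfisz-type saving up to a constant in $\gamma$. All the convergence checks you flag go through exactly as in the paper.
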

\begin{proof} Switching the order of summation, 
the sum in the lemma becomes 
\[
\sum_{\substack{ d \leq u   } } \delta(d)  
\sum_{\substack { 1\leq m \leq u\\   d\mid m   }} 
\mu(m)^2 
=
\sum_{\substack{ d \leq u   } } \delta(d)   \mu( d  )^2 
\sum_{\substack { 1\leq  m' \leq u/d\\  \gcd( m', d   )=1   }} 
\mu(  m' )^2 
.\]
The contribution of 
$d>u^{3/4}$
is admissible,
since it is at most 
\[
\ll 
\sum_{\substack{ d > u^{3/4}   } } \delta(d)   \mu( d  )^2  \frac{u}{d}
\leq   
\sum_{\substack{ d > u^{3/4}   } }  
\frac{ \tau(d)^2 }{d^2}
\frac{  u  }{d  }
\ll   u^{-1/4}
 \] due to $\mu(d)^2 4^{\#\{p\mid d \} }    \leq \tau(d)^2$ and 
 the divisor bound $ \tau(d ) \ll_\epsilon d^\epsilon$ for all $\epsilon>0$.  
To the remaining range, $1\leq d \leq u^{3/4}$,
we apply 
Corollary~\ref{cor:walf} 
with $x=u/d  $ and $ a=d  $.
It gives 
\[
\sum_{\substack{ d \leq u^{3/4}
  } } \delta(d)   \mu( d  )^2 
\l(
\frac{u }{ d \zeta(2) }
\l(\prod_{p\mid d   } \l(1+\frac{1}{p}\r)^{-1 }\r)
+O
\l(
\frac{ 
\tau(d)^3 
u^{\frac{1}{2}} } {d^{\frac{1}{2}}
\exp\l(\gamma'
(\log \frac{u}{d}  )^{3/5}
(\log \log \frac{u}{d } )^{-1/5}
\r)
}
\r)
 \r)
.\]
Using 
$d \leq u^{3/4}$
we see that $\log \frac{u}{d} \geq \frac{1}{4} \log u $, hence 
the error term is 
\[
\frac{ 
u^{\frac{1}{2}} } { \exp\l( \gamma 
(\log  u  )^{3/5}
(\log \log u )^{-1/5}
\r)   }
\sum_{\substack{ d \leq u^{3/4}
  } }  
\frac{\tau(d)^3  } {d^{\frac{3}{2}}}
\ll 
\frac{  
u^{\frac{1}{2}} } { \exp\l( \gamma 
(\log  u  )^{3/5}
(\log \log u )^{-1/5}
\r)   }
 \] for
 some positive 
absolute constant $\gamma $.
Finally, we complete the summation in the main term: 
\[
\sum_{\substack{ d   >    u ^{3/4}   } } 
 \frac{  \delta(d)   \mu( d  )^2   }{ d  }
 \prod_{p\mid d   } \l(1+\frac{1}{p}\r)^{-1 }
\leq   
\sum_{\substack{ d   >    u ^{3/4}    } } 
 \frac{   \tau(d)  ^2  }{ d^3   }
\ll   u^{-3/4}.
\qedhere
\]
  \end{proof}

\begin{proof}
[Proof of Proposition~\ref{prop:final step}]    This follows from applying  
Lemma~\ref{lem:the last lemma}
with 
\[
\delta(p)   =   -1+ \frac{1}{1-2p^{-k } }=\frac{2}{p^k -2 } \leq \frac{4}{p^2}
.  
\qedhere
\]
\end{proof}

\section{Analysis of the leading constant}
\label
{s:lead const}
We   analyse the   behavior 
of the leading  constant 
 in Theorem~\ref{thm:main}.
\begin{theorem}
\label
{prop:asymptot as k to infty}
The following holds for all $k>1 $
and with an absolute implied 
constant,
\[
c_k = 1 +O\l( \frac{1}{k  } \r)
. \]  
\end{theorem}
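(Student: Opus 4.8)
The plan is to work directly from the explicit formula \eqref{eq:foryoursafety}, writing $c_k$ as a product of four factors,
\[
c_k
=
\frac{2k}{2k-1}
\cdot
\frac{\zeta(2-1/k)}{\zeta(2)}
\cdot
\zeta(k)^2
\cdot
\prod_p\left(1-\frac{2p}{(p+1)p^k}\right)
=:
A_k\, B_k\, C_k\, D_k
,
\]
and showing that each of $A_k,B_k,C_k,D_k$ equals $1+O(1/k)$ with an absolute implied constant, uniformly in $k\geq 2$. In fact three of the four factors will satisfy the much stronger bound $1+O(2^{-k})$, and only $A_k$ and $B_k$ contribute a genuine $1/k$ term. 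Once these four estimates are in hand I would recombine them through the telescoping identity
\[
A_kB_kC_kD_k-1
=(A_k-1)B_kC_kD_k+(B_k-1)C_kD_k+(C_k-1)D_k+(D_k-1)
,
\]
using that each of $A_k,B_k,C_k,D_k$ is positive and bounded above uniformly for $k\geq 2$ (which is immediate from the estimates below), so that every summand on the right is $O(1/k)\cdot O(1)=O(1/k)$.

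For the four individual factors I would argue as follows. The factor $A_k=1+\frac{1}{2k-1}$ is trivially $1+O(1/k)$. For $C_k$ note that $\zeta(k)-1=\sum_{n\geq 2}n^{-k}\leq 2^{-k}+\int_2^{\infty}t^{-k}\,\d t\ll 2^{-k}$ for $k\geq 2$, hence $\zeta(k)^2=1+O(2^{-k})$. For $D_k$, every local factor satisfies $0<\tfrac{2p}{(p+1)p^k}=\tfrac{2}{(p+1)p^{k-1}}\leq\tfrac13$ once $k\geq 2$, so, using $|\log(1-x)|\leq\tfrac32 x$ for $0\leq x\leq\tfrac13$, one gets $|\log D_k|\leq\tfrac32\sum_p\tfrac{2}{(p+1)p^{k-1}}\leq 3\sum_p p^{-k}\ll 2^{-k}$, hence $D_k=1+O(2^{-k})$. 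The only factor giving a true $1/k$ is $B_k$: since the $n=1$ term vanishes,
\[
\zeta(2-1/k)-\zeta(2)=\sum_{n\geq 2}n^{-2}\left(n^{1/k}-1\right),
\]
and bounding $n^{1/k}-1=\mathrm{e}^{(\log n)/k}-1\leq\frac{\log n}{k}\,n^{1/k}\leq\frac{\log n}{k}\,n^{1/2}$ for $k\geq 2$ (via $\mathrm{e}^{x}-1\leq x\mathrm{e}^{x}$) yields $\zeta(2-1/k)-\zeta(2)\leq\frac1k\sum_{n\geq 2}\frac{\log n}{n^{3/2}}\ll\frac1k$; dividing by $\zeta(2)$ gives $B_k=1+O(1/k)$. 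Substituting the four bounds into the telescoping identity yields $c_k=1+O(1/k)$.

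I do not anticipate any real obstacle: the argument is entirely elementary and uses nothing deeper than crude bounds for $\zeta$ near $s=2$ and the convergence of the relevant prime and integer series. The points that deserve a moment's care are (i) that in $B_k$ the argument $2-1/k$ lies in $[\tfrac32,2)$ for $k\geq 2$, so it stays bounded away from the pole of $\zeta$ at $s=1$ and the increment $\zeta(2-1/k)-\zeta(2)$ really is $O(1/k)$ and not larger (this is why the statement needs $k$ bounded away from $1$, which for integer $k$ means $k\geq 2$); and (ii) checking the uniform positivity and boundedness of $A_k,B_k,C_k,D_k$ for $k\geq 2$ before invoking the telescoping identity, both of which are immediate from the displayed estimates.
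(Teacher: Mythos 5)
Your proposal is correct and follows essentially the same route as the paper: both arguments estimate each factor of the explicit formula \eqref{eq:foryoursafety} separately, showing the Euler product and $\zeta(k)^2$ are $1+O(2^{-k})$ and that $2k/(2k-1)$ and $\zeta(2-1/k)/\zeta(2)$ are $1+O(1/k)$. The only cosmetic difference is that the paper bounds the Euler product below by $\zeta(k-1/2)^{-1}$ and handles $\zeta(2-1/k)$ via a bound on $\zeta'$ over $[3/2,2)$, where you use direct series estimates.
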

\begin{proof}  
We note that $2p/(p+1) \leq p^{1/2} $, hence,  \[ 
\prod_p \l(1-\frac{2p}{(p+1) p^{ k} }\r) 
\geq \zeta(k-1/2)^{-1} .\] 
Using $\zeta(\sigma)=1+O(2^{-\sigma})$ for $\sigma>3/2$
and \eqref {eq:foryoursafety} 
yields 
\[c_k =
\frac{2k}{(2k-1) }
\frac{\zeta(2-1/k) }{\zeta(2) } 
(1+O(2^{- k  })  ) 
.\] We conclude the proof by 
 using
the bound 
$\max   \l\{  |\zeta'(\sigma)|: \frac{3}{2}   \leq  \sigma<  2 \r\}  =O(1)$
to infer that
\[
\zeta\l(2-\frac{1}{k}\r)
=\zeta(2)
+O\l(\frac{1}{k}\r).
\qedhere
\]
\end{proof}

\section{Approximations via the Riemann Hypothesis}
\label
{s:under rieman}
In this section we prove Theorem \ref{thm:main rieman}. 
The main   input 
is the following result.
 \begin{lemma}
[Liu,~\cite{MR3412720}]
\label
{lem:liu}
 Assume the Riemann Hypothesis.
Then for every fixed 
 $\delta>0$ we have 
\[
\sum_{n \in \N \cap [1,x] }
\mu(n)^2
=
\frac{x}{\zeta(2) }
+O_\delta
\l(
x^{\frac{11}{35}+\delta}
\r)
.\]
\end{lemma}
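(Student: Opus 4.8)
The plan is to reduce the count of square-free integers to a bound for a M\"obius-twisted sawtooth sum, and then to estimate that sum by dyadic decomposition combined with the exponential-sum machinery, in the spirit of Montgomery--Vaughan's treatment of the distribution of square-free numbers.

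\emph{Reduction to a sawtooth sum.} Using $\mu(n)^2 = \sum_{d^2 \mid n}\mu(d)$ one has the exact identity
\[
\sum_{n \le x}\mu(n)^2 = \sum_{d \le \sqrt x}\mu(d)\Big\lfloor\frac{x}{d^2}\Big\rfloor = x\sum_{d \le \sqrt x}\frac{\mu(d)}{d^2} - \frac12\sum_{d \le \sqrt x}\mu(d) - \sum_{d \le \sqrt x}\mu(d)\,\psi\!\left(\frac{x}{d^2}\right),
\]
where $\psi(t) = \{t\} - \tfrac12$. Under the Riemann Hypothesis, $\sum_{n \le t}\mu(n) \ll_\delta t^{1/2+\delta}$; partial summation then gives $\sum_{d > \sqrt x}\mu(d)/d^2 \ll_\delta x^{-3/4+\delta}$, so the first sum is $x/\zeta(2) + O_\delta(x^{1/4+\delta})$ and the second is $O_\delta(x^{1/4+\delta})$. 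Since $11/35 > 1/4$, it suffices to prove that $S(x) := \sum_{d \le \sqrt x}\mu(d)\,\psi(x/d^2) \ll_\delta x^{11/35+\delta}$.

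\emph{Dyadic decomposition and Fourier expansion.} Split $S(x)$ into $O(\log x)$ pieces $S_D = \sum_{D < d \le 2D}\mu(d)\,\psi(x/d^2)$ with $D \le \sqrt x$ a power of $2$. On each piece replace $\psi$ by a truncated Fourier expansion of length $H$ (for instance a Vaaler approximation), leaving
\[
S_D = \sum_{1 \le |h| \le H}\frac{-1}{2\pi i h}\sum_{D < d \le 2D}\mu(d)\,e\!\left(\frac{hx}{d^2}\right) + (\text{remainder}),
\]
where $e(\theta) = e^{2\pi i\theta}$; the remainder is handled using the spacing $\asymp x/D^3$ of the points $x/d^2$ (grouping the $d$ according to the value of $\lfloor x/d^2\rfloor$ once $D$ exceeds $x^{1/3}$). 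Matters reduce to bounding the exponential sums $T(D,h) := \sum_{D < d \le 2D}\mu(d)\,e(hx/d^2)$, carrying the weight $1/h$, for $1 \le h \le H$.

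\emph{Estimating $T(D,h)$ and optimising --- the crux.} To go below the trivial bound $|T(D,h)| \le D$, I would decompose $\mu$ on $[D,2D]$ via a Vaughan- or Heath-Brown-type identity into $O(\log^{O(1)}x)$ bilinear (Type I and Type II) sums $\sum_{m \asymp M}\sum_{n \asymp N}a_m b_n\, e\big(hx/(mn)^2\big)$ with $MN \asymp D$. Type I sums (with one smooth, long variable) are estimated by van der Corput's method or an exponent pair $(\kappa,\lambda)$ applied to the inner sum, whose phase has derivatives of sizes $\asymp hx/(MN^2), hx/(MN^3),\dots$; under the Riemann Hypothesis one may alternatively route such pieces through a contour integral and the bound for $\zeta(s)^{-1}$ near the critical line, which is where RH enters substantively. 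Type II sums are treated by Cauchy--Schwarz and a (double) large-sieve estimate for the resulting mean value over differences, again invoking van der Corput for the off-diagonal exponential sums. Summing over $h \le H$ and over the dyadic scales $D$, and then choosing $H$, the cutoffs between the various regimes, and the exponent pairs so as to balance all error contributions, yields $S(x) \ll_\delta x^{11/35+\delta}$, the $x^\delta$ absorbing logarithmic and $\epsilon$-losses; reassembling the pieces gives the lemma. I expect this final optimisation to be the main obstacle: the exponent $11/35$ is precisely its output, and the delicate part is the intermediate range of scales (and of bilinear ratios $M:N$) in which no single elementary tool is sharp enough and the M\"obius cancellation must be combined with the exponential-sum estimates as efficiently as possible.
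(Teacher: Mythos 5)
The paper does not prove this statement: it is quoted, with attribution, as a theorem of Liu (\emph{On the distribution of squarefree numbers}, J.\ Number Theory 159 (2016)), so there is no internal proof to compare yours against. Judged on its own terms, your write-up is a strategy outline rather than a proof. The reduction you actually carry out --- the identity $\mu(n)^2=\sum_{d^2\mid n}\mu(d)$, the splitting of $\lfloor x/d^2\rfloor$ into main term, constant and sawtooth, and the RH bounds $O_\delta(x^{1/4+\delta})$ for the first two pieces --- is correct, but it is the standard opening of every treatment of this problem since Montgomery--Vaughan and only brings you to the starting line. The entire content of the lemma is the bound $S(x)\ll_\delta x^{11/35+\delta}$, and at exactly that point you write that you \emph{would} decompose $\mu$, apply van der Corput and exponent pairs, and choose $H$ and the cutoffs ``so as to balance all error contributions''; you then concede that this optimisation is the main obstacle and that $11/35$ is precisely its output. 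That is the gap: $11/35$ is not the result of a routine balancing that a reader could reconstruct from your sketch, it is the outcome of Liu's specific new estimates for the double exponential sums that arise, and without those estimates (or a citation standing in for them) your argument produces no exponent below $1/2$, let alone $11/35$.

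A second, more structural point: bounding $S(x)=\sum_{d\le\sqrt x}\mu(d)\psi(x/d^2)$ purely by Fourier expansion plus bilinear decomposition of $\mu$ cannot work uniformly over the whole range $d\le\sqrt x$. For $d$ in a dyadic block near $\sqrt x$ and small $h$, the second derivative of the phase $hx/d^2$ is of size $h/x$, and the van der Corput second-derivative test gives $\ll h^{1/2}+(x/h)^{1/2}$, i.e.\ no saving over the trivial bound when $h=1$. Every proof of a sub-$x^{1/2}$ error term therefore handles the range $d>y$ (for a suitable $y<\sqrt x$) not by exponential sums but by the Montgomery--Vaughan swapping argument: writing $\sum_{d>y,\ d^2e\le x}\mu(d)=\sum_{e\le x/y^2}\bigl(M(\sqrt{x/e})-M(y)\bigr)$ with $M(t)=\sum_{n\le t}\mu(n)$ and invoking RH on $M$. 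Your parenthetical remark about routing some pieces ``through a contour integral and the bound for $\zeta(s)^{-1}$'' gestures at this but does not supply it. Since the paper itself treats this statement as an external input, the appropriate course is to cite Liu's theorem rather than to sketch its proof.
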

This result 
uses 
van der Corput’s
method for estimating exponential sums.

\begin{corollary}
\label
{cor:liu}
Assume the Riemann Hypothesis
and let 
 $\delta>0$
be arbitrary and 
fixed.
Then for every  $a\in \N$ and $x \geq 1 $
we have 
\[
\sum_{\substack {   n \in \N \cap [1,x] \\ \gcd(n,a)=1 } }
\mu(n)^2
=
\frac{x}{\zeta(2) }
\l(\prod_{p\mid a } \l(1+\frac{1}{p}\r)^{-1 }\r)
+O_\delta
\l(
\tau(a)^3
x^{
\frac{11}{35}+\delta
}
\r)
,\]
where the implied constant 
depends at most on $\delta$.
\end{corollary}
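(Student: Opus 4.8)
The plan is to mirror the proof of Corollary~\ref{cor:walf}, substituting Liu's bound (Lemma~\ref{lem:liu}) for Walfisz's (Lemma~\ref{lem:walf}). First I recall the Dirichlet convolution identity from that proof: with $g_a$ the multiplicative function defined there --- supported on integers all of whose prime divisors divide $a$, with $g_a(p^j)=(-1)^j$ --- one has $\mathds 1_{\gcd(a,n)=1}(n)\,\mu(n)^2=(g_a\ast\mu^2)(n)$, so that
\[
\sum_{\substack{n\in\N\cap[1,x]\\ \gcd(n,a)=1}}\mu(n)^2
=\sum_{1\leq c\leq x}g_a(c)\sum_{1\leq d\leq x/c}\mu(d)^2 .
\]
For each $c\leq x$ the inner range satisfies $x/c\geq 1$, so Lemma~\ref{lem:liu} gives $\sum_{d\leq x/c}\mu(d)^2=\frac{x}{c\,\zeta(2)}+O_\delta\big((x/c)^{11/35+\delta}\big)$ with an implied constant depending only on $\delta$, uniform in $c$.

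Unlike in Corollary~\ref{cor:walf}, no truncation of the $c$-sum at an auxiliary parameter is needed here, because Liu's error term is a clean power of $x$: summing it over the whole range produces a contribution
\[
\ll_\delta x^{11/35+\delta}\sum_{1\leq c\leq x}\frac{|g_a(c)|}{c^{11/35+\delta}}
\leq x^{11/35+\delta}\prod_{p\mid a}\frac{1}{1-p^{-11/35-\delta}} .
\]
Since $11/35>1/4$ and $\tfrac{1}{1-2^{-1/4}}<8$, each Euler factor here is $<8$, so the product is at most $8^{\#\{p\mid a\}}\leq\tau(a)^3$ (using $2^{\#\{p\mid a\}}\leq\tau(a)$), exactly the bound invoked in the proof of Corollary~\ref{cor:walf}. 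Hence the total error is $\ll_\delta\tau(a)^3 x^{11/35+\delta}$.

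It remains to treat the main term $\frac{x}{\zeta(2)}\sum_{c\leq x}\frac{g_a(c)}{c}$. I complete it to the full Dirichlet series, whose value $\sum_{c\geq 1}\frac{g_a(c)}{c}=\prod_{p\mid a}(1+1/p)^{-1}$ was computed in the proof of Corollary~\ref{cor:walf}; the tail is bounded by the same Rankin-type device used there, $\sum_{c>x}\frac{|g_a(c)|}{c}\leq x^{-3/4}\prod_{p\mid a}\frac{1}{1-p^{-1/4}}\ll\tau(a)^3 x^{-3/4}$, via the Euler-product estimate above. Multiplying by $x/\zeta(2)$ turns this into an error $\ll\tau(a)^3 x^{1/4}$, which is absorbed into $\tau(a)^3 x^{11/35+\delta}$ because $1/4<11/35$. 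Assembling the main term $\frac{x}{\zeta(2)}\prod_{p\mid a}(1+1/p)^{-1}$ with the two error contributions yields the claimed formula. There is no genuine obstacle in this argument: the only care needed is the uniform-in-$a$ bookkeeping of the Euler products --- which reduces to the numerical facts $\tfrac{1}{1-2^{-1/4}}<8$ and $2^{\#\{p\mid a\}}\leq\tau(a)$ --- together with the inequality $1/4<11/35$, which is precisely what lets us dispense with the truncation step that appeared in the unconditional proof of Corollary~\ref{cor:walf}.
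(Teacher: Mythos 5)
Your proposal is correct and follows essentially the same route as the paper: the same convolution with $g_a$, direct application of Liu's bound without truncation, the same $8^{\omega(a)}\leq\tau(a)^3$ bookkeeping, and a Rankin-type completion of the main term (you use exponent $3/4$ where the paper uses $24/35$, an immaterial difference).
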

\begin{proof}
We make use of the function 
$g_a(n)$ that is defined in the proof of Corollary~\ref{cor:walf}. Thus
the sum in our corollary equals 
\[
\sum_{1\leq c \leq x }   g_a(c)
\sum_{1\leq d \leq x/c}   \mu(d)^2
=
\frac{x}{\zeta(2)}
\sum_{1\leq c \leq x }   
\frac{ g_a(c) }{c}
+O_\epsilon\l( 
x^{{\frac{11}{35}+\epsilon
}}
 \sum_{1\leq c \leq x }   
\frac{ |g_a(c) | }{c^{\frac{11}{35}+\epsilon
}} \r )
,\]
where a use of Lemma~\ref{lem:liu}
has been made.
The bound $ | g_a(c) | \leq \mathds 1_{p\mid c \Rightarrow p\mid a }(c)$
shows that 
the error term is 
\[
\ll 
\prod_{p\mid a } 
\l(1 +\sum_{k=1}^\infty
\frac{1}{p^{{\frac{11}{35}k+\epsilon k
}
}} 
\r)
\leq 
\prod_{p\mid a } \l(1 +\sum_{k=1}^\infty
\frac{1}{2^{{\frac{11}{35}k 
}
}} 
\r)
\leq 8^{\omega(a)} 
\leq \tau(a)^3
.\]
The same bound yields 
\[
\sum_{c>x } \frac{|g_a(c)|}{c}
\leq 
\sum_{\substack{ c\in \N \\ p\mid c \Rightarrow p\mid a   }}
\l(\frac{c}{x}
\r)^{\frac{24}{35 }}
\frac{1}{c}
\leq 
x^{-\frac{24}{35}}
\prod_{p\mid a } 
\l(  1+\sum_{k=1}^\infty \frac{1 }{p^{\frac{11}{35}   }} 
 \r)
\leq 
x^{-\frac{24}{35}}
\prod_{p\mid a } 
\l(  1+\sum_{k=1}^\infty \frac{1 }{2^{\frac{11}{35}   }} 
 \r)
\leq 
\frac{\tau(a)^3
}{x^{\frac{24}{35}}
}
.
\qedhere
\]  
\end{proof}
\begin{lemma}
\label
{lem:the icecream lemma}
Assume the Riemann Hypothesis
and let 
 $\delta>0$
be arbitrary and 
fixed. 
Let   
$\delta:\N \to \R$ be a multiplicative function 
with  $|  \delta(p)   |
\leq  \frac{4}{p^2} $    
for every  prime   $p$.
 Then for all $u \geq 1 $  
we have 
\begin{align*} 
\sum_{\substack { 1\leq m \leq u   }} 
\mu(m)^2 \prod_{p\mid m  } (1+\delta(p) ) 
&=\l(
\prod_p \l( 1+\frac{\delta(p)   }{p+1 }\r )
\r)\frac{u}{\zeta(2) } 
+O_\delta
\l( 
u^{\frac{11}{35}+\delta}  
\r )
,\end{align*}
 where the 
implied constant depends at most on $\delta$.
\end{lemma}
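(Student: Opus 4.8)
The plan is to mimic the proof of Lemma~\ref{lem:the last lemma} verbatim, replacing each invocation of the unconditional Walfisz estimate (via Corollary~\ref{cor:walf}) by its conditional counterpart under the Riemann Hypothesis, namely Corollary~\ref{cor:liu}. The structure of the argument is entirely parallel: first switch the order of summation to write
\[
\sum_{1\leq m \leq u} \mu(m)^2 \prod_{p\mid m}(1+\delta(p))
= \sum_{d\leq u} \delta(d)\mu(d)^2 \sum_{\substack{1\leq m'\leq u/d \\ \gcd(m',d)=1}} \mu(m')^2,
\]
then split the outer sum at $d = u^{3/4}$.

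For the tail $d > u^{3/4}$, I would reuse the identical estimate from the proof of Lemma~\ref{lem:the last lemma}: the hypothesis $|\delta(p)|\leq 4p^{-2}$ gives $\mu(d)^2 4^{\#\{p\mid d\}} \leq \tau(d)^2$, and combined with the trivial bound on the inner sum and the divisor bound $\tau(d)\ll_\epsilon d^\epsilon$ this contributes $\ll u^{-1/4}$, which is absorbed into the error term $O_\delta(u^{11/35+\delta})$. For the main range $1\leq d\leq u^{3/4}$, I would apply Corollary~\ref{cor:liu} with $x = u/d$ and $a = d$ (and with the $\delta$ of that corollary chosen as, say, half of the $\delta$ appearing in the present statement, to leave room). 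This produces the main term
\[
\frac{u}{\zeta(2)}\sum_{d\leq u^{3/4}} \frac{\delta(d)\mu(d)^2}{d}\prod_{p\mid d}\Bigl(1+\frac1p\Bigr)^{-1}
\]
plus an error of size $O_\delta\bigl(u^{11/35+\delta/2}\sum_{d\leq u^{3/4}} \tau(d)^3 d^{-1}\bigr)$; since $(u/d)^{11/35+\delta/2}\leq u^{11/35+\delta/2}$ and $\sum_{d\leq u^{3/4}}\tau(d)^3/d \ll (\log u)^{O(1)} \ll_\delta u^{\delta/2}$, this is $O_\delta(u^{11/35+\delta})$ as required.

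It remains to complete the sum in the main term from $d\leq u^{3/4}$ to all $d\in\N$. The tail $d > u^{3/4}$ of the completed series is bounded, exactly as in Lemma~\ref{lem:the last lemma}, by $\sum_{d>u^{3/4}} \tau(d)^2 d^{-3} \ll u^{-3/4}$, contributing $\ll u^{1/4}$ after multiplication by $u/\zeta(2)$, again absorbed into the error term. The completed Dirichlet series factors as an Euler product: writing it out at each prime and summing the geometric-type series, one obtains $\prod_p\bigl(1 + \delta(p)/(p+1)\bigr)$, which is precisely the constant in the statement. The only point requiring a little care — though it is not a genuine obstacle — is the mild notational clash between the fixed parameter $\delta>0$ controlling the error exponent and the multiplicative function $\delta(\cdot)$; both are inherited from the statement and cause no ambiguity in the argument, since the former only enters through the exponent $11/35+\delta$. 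There is no analytic difficulty here beyond what Corollary~\ref{cor:liu} already supplies.
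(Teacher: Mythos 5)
Your proposal is correct and follows essentially the same route as the paper: switch the order of summation, apply Corollary~\ref{cor:liu} with $x=u/d$ and $a=d$, complete the main term to the Euler product $\prod_p(1+\delta(p)/(p+1))$, and absorb everything else into $O_\delta(u^{11/35+\delta})$. The only (harmless) difference is that you split the $d$-sum at $u^{3/4}$ as in the unconditional Lemma~\ref{lem:the last lemma}, whereas the paper applies Corollary~\ref{cor:liu} over the whole range $d\leq u$ since the conditional error $(u/d)^{11/35+\delta}$ needs no such truncation; the decay $|\delta(d)|\mu(d)^2\ll \tau(d)^2 d^{-2}$ makes the error sum converge either way.
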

\begin{proof} 
As in the proof of Lemma~\ref{lem:the last lemma}
we see that the sum in our lemma is 
\[
\sum_{\substack{ d \leq u   } } \delta (d)   \mu( d  )^2 
\sum_{\substack { 1\leq  m' \leq u/d\\  \gcd( m', d    )=1   }} 
\mu(  m' )^2 
,\] which, by
 Corollary~\ref{cor:liu},
is
\[\sum_{\substack{ d \leq u      } } \delta (d)   \mu( d  )^2 
\l(
\frac{u}{d\zeta(2) }
\l(
\prod_{p\mid d    } \l(1+\frac{1}{p}\r)^{-1 }\r)
+O_\delta
\l(
\tau(d)^3 
\frac
{u^{\frac{11}{35}+\delta}  }
{d^{\frac{11}{35}+\delta} } 
\r)
\r)
.\]
The main term above matches the main term in our lemma
up to a quantity that has modulus 
\[
\ll u 
 \sum_{\substack{ d > u    } } 
 \frac{
\delta (d)   \mu( d  )^2 
}{d  }
\ll
u
 \sum_{d>u}
\tau(d)^2
d^{-3} \ll 1.
 \]
The error term contribution is 
\[
\ll_\delta
{u^{\frac{11}{35}+\delta}  }
 \sum_{\substack{ d \leq u   } } 
\frac
{ 
\psi(d)   \mu( d  )^2 
  \tau(d)^3  }
{d^{\frac{11}{35}+\delta} } 
  \ll_\delta
 {u^{\frac{11}{35}+\delta}  }
\prod_{p} \l(1+\frac{32 }{p^2}\r) \ll_\delta {
u^{\frac{11}{35}+\delta}  }
 .
\qedhere
\]
\end{proof} The proof of the next lemma
follows directly from Lemma~\ref{lem:the icecream lemma}.
\begin{lemma}
\label
{lem:asymptot lambd under rh}
Assume the Riemann Hypothesis
and let 
 $\delta>0$
be arbitrary and 
fixed.
Then 
for all  $k>1 $
and 
$u \geq 1 $
we have \[ \sum_{\substack{  
1\leq 
  t \leq u  
}}
|\mu( t )|      \prod_{p\mid t } \frac{1}{1-2p^{-k}} 
= \gamma _k u
+O_{k,\delta}
( u^{\frac{11}{35}+\delta}    ) 
,
\]   
where
 the implied constant depends
at most on $\delta$ and  $k$.
\end{lemma}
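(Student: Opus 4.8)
The plan is to obtain this statement as a one-line specialisation of Lemma~\ref{lem:the icecream lemma}, in exactly the same way that Proposition~\ref{prop:final step} was deduced from Lemma~\ref{lem:the last lemma}. Concretely, I would apply Lemma~\ref{lem:the icecream lemma} with the multiplicative function $\delta$ defined on primes by
\[
\delta(p) := -1 + \frac{1}{1 - 2p^{-k}} = \frac{2}{p^k - 2}
\]
(extended, say, completely multiplicatively; only its values at primes are relevant, since the sum in question runs over square-free moduli). The first step is to check the hypothesis $|\delta(p)| \leq 4/p^2$: since $k \geq 2$ one has $p^2 + 4 \leq 2p^2 \leq 2p^k$ for every prime $p$, whence $0 < \tfrac{2}{p^k - 2} \leq \tfrac{4}{p^2}$. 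This is the only point requiring any thought, and it is the same verification already carried out in the proof of Proposition~\ref{prop:final step}.

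With this choice of $\delta$ one has $\prod_{p \mid m}(1 + \delta(p)) = \prod_{p \mid m}\frac{1}{1 - 2p^{-k}}$, so the left-hand side of Lemma~\ref{lem:the icecream lemma} is precisely the sum in the present statement. The main term it produces is $\bigl(\prod_p (1 + \tfrac{\delta(p)}{p+1})\bigr)\tfrac{u}{\zeta(2)}$, and substituting the value of $\delta(p)$ turns the constant into
\[
\frac{1}{\zeta(2)} \prod_p \left( 1 + \frac{2}{(p+1)(p^k - 2)} \right),
\]
which is exactly the constant $\gamma_k$ of Proposition~\ref{prop:final step}; the Euler product converges absolutely because its $p$-th factor is $1 + O(p^{-k-1})$, and it is positive.

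For the error term, Lemma~\ref{lem:the icecream lemma} supplies $O_\delta(u^{11/35 + \delta})$ with an implied constant depending only on the fixed parameter $\delta$, the bound $|\delta(p)| \leq 4/p^2$ on the multiplicative function being uniform in $k$; this is a fortiori $O_{k,\delta}(u^{11/35 + \delta})$, which matches the stated error. No genuine obstacle arises here: the entire argument is the substitution $\delta(p) = \frac{2}{p^k-2}$ into Lemma~\ref{lem:the icecream lemma} (with the Riemann Hypothesis assumed throughout), the only line of care being the elementary inequality $\frac{2}{p^k-2} \leq \frac{4}{p^2}$ that licenses the application, exactly as in the proof of Proposition~\ref{prop:final step}.
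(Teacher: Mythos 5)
Your proposal is correct and is exactly the paper's argument: the paper deduces this lemma by specialising Lemma~\ref{lem:the icecream lemma} to $\delta(p)=-1+\frac{1}{1-2p^{-k}}=\frac{2}{p^k-2}$, just as Proposition~\ref{prop:final step} is deduced from Lemma~\ref{lem:the last lemma}. The verification $\frac{2}{p^k-2}\leq\frac{4}{p^2}$ and the identification of the main-term constant with $\gamma_k$ are precisely the points the paper relies on.
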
The proof of Theorem~\ref
{thm:main rieman} 
is now concluded  as that of 
Theorem~\ref{thm:main}
 by replacing the use of 
Proposition~\ref{prop:final step}
by Lemma \ref{lem:asymptot lambd under rh}.

\end{document}